\numberwithin{equation}{section}
\newtheorem{Theorem}{Theorem}[section]
\newtheorem*{Theorem*}{Theorem}
\newtheorem{Lemma}[Theorem]{Lemma}
\newtheorem{Proposition}[Theorem]{Proposition}
 { \theoremstyle{definition}
\newtheorem{Definition}[Theorem]{Definition}

\newtheorem{Example}[Theorem]{Example}
\newtheorem{Remark}[Theorem]{Remark} }
\begin{document}
\allowdisplaybreaks

\newcommand{\arXivNumber}{2108.02087}

\renewcommand{\PaperNumber}{097}

\FirstPageHeading

\ShortArticleName{Weil Classes and Decomposable Abelian Fourfolds}

\ArticleName{Weil Classes and Decomposable Abelian Fourfolds}

\Author{Bert VAN GEEMEN}

\AuthorNameForHeading{B.~van Geemen}

\Address{Dipartimento di Matematica, Universit\`a di Milano, Via Saldini 50, I-20133 Milano, Italy}
\Email{\href{mailto:lambertus.vangeemen@unimi.it}{lambertus.vangeemen@unimi.it}}

\ArticleDates{Received May 11, 2022, in final form December 06, 2022; Published online December 13, 2022}

\Abstract{We determine which codimension two Hodge classes on $J\times J$, where $J$ is a~general abelian surface, deform to Hodge classes on a family of abelian fourfolds of Weil type. If a Hodge class deforms, there is in general a unique such family. We show how to determine the imaginary quadratic field acting on the fourfolds of Weil type in this family as well as their polarization. There are Hodge classes that may deform to more than one family. We relate these to Markman's Cayley classes.}

\Keywords{abelian varieties; Hodge classes}

\Classification{14C30; 14C25; 14K20}

\section{Introduction}
The Hodge conjecture asserts that for any smooth complex projective variety $X$ and any
non-negative integer $p$ the vector space $B^p(X):=H^{2p}(X,{\bf Q})\cap H^{p,p}(X)$
of Hodge classes is spanned by the classes of algebraic cycles.

From a study of the Mumford Tate groups of abelian fourfolds, Moonen and Zarhin~\cite{MZ1,MZ2},
obtained the result that if the Hodge conjecture holds for abelian fourfolds of Weil type,
then it holds for all abelian fourfolds. An abelian fourfold is of Weil type if it has
an imaginary quadratic field in its endomorphism algebra
and each non-rational element in that field
has two eigenvalues on $H^{1,0}$ with the same multiplicity.
Such fourfolds are parametrized by four-dimensional subvarieties in the moduli space
of all abelian fourfolds with a fixed polarization.

The first results on the Hodge conjecture for fourfolds of Weil type were obtained
by Chad Schoen \cite{schoenhodge, Sadd}.
The paper \cite{schoen} provides an alternative
construction of algebraic cycles representing `exceptional' Hodge classes in
one of the cases considered in~\cite{schoenhodge}
and might have much wider applications.
Starting from a certain reducible surface $S\subset J\times J$,
where $J$ is the Jacobian of a general genus two curve, he shows that $S$ deforms
in a specific four-dimensional family of abelian fourfolds of Weil type.
This verifies the Hodge conjecture for these abelian fourfolds.
The general deformation of $S$, now called a Schoen surface,
was studied in \cite{CMLR, RRS} and a rather explicit description
of these surfaces can be found there.

In this paper we consider more generally the question
of which Hodge classes in $B^2(J\times J)$
can be deformed to an abelian fourfold of Weil type,
and which is the imaginary quadratic field acting on such a fourfold.
We provide answers to these questions in Section~\ref{limHC}.
In particular, such a Hodge class must be contained
in a determinantal cubic fourfold $Z\subset {\bf P} B^2(J\times J)\cong {\bf P}^5$
which is the secant variety of a Veronese surface.
This Veronese surface is the singular locus of~$Z$. A~Hodge class in $Z$, but not in $\operatorname{Sing}(Z)$,
will deform in at most one family. A~Hodge class in $\operatorname{Sing}(Z)$ may
deform to infinitely many families with distinct quadratic fields.
This rather curious fact is also the reason we restrict ourselves to fourfolds in the last sections.
In case $\dim J=n>2$ the deformation of a Hodge class to a family of Weil type, if it exists,
seems to be unique. However, a good description of the subvariety of ${\bf P} B^n(J\times J)$ of classes
that admit such deformations is much harder to find.

In his recent paper \cite{Markman}, E.~Markman
proved the Hodge conjecture for the
abelian fourfolds of Weil type,
with a polarization having trivial discriminant, using methods
from deformation theory of hyperk\"ahler manifolds. He also uses specific Hodge classes,
called Cayley classes, that deform in Weil type families for any imaginary quadratic field.
We briefly discuss the relation with our results in Section~\ref{cayleyclasses}.

In Section \ref{cycles}, we discuss the Hodge classes defined by certain algebraic
cycles from the papers~\cite{schoen} and~\cite{vG}.
In Section~\ref{secdis}, we show that the discriminants of the polarized abelian fourfolds
of Weil type we consider are trivial.

\section{Hodge classes on abelian fourfolds of Weil type}\label{ab4w}

\subsection{Abelian varieties of Weil type}
Let $A$ be an abelian variety and let $K={\bf Q}\big(\sqrt{-d}\big)$, with $d\in{\bf Z}_{>0}$,
be an imaginary quadratic field. An abelian variety of Weil type (with field
$K$) is a pair $(A,K)$, where $A$ is an abelian variety and $K\hookrightarrow \operatorname{End}_{\bf Q}(A)$
is a subalgebra, such that for all $k\in K$
the endomorphism of $T_0A$
defined by the differential of $k=a+b\sqrt{-d}\in K$
has eigenvalues $k=a+b\sqrt{-d}$ and $\bar{k}=a-b\sqrt{-d}$ with the same multiplicity.
Equivalently, the eigenvalues of any $k\in K$ on $H^{1,0}$ have the same multiplicity.
In particular, if $(A,K)$ is of Weil type, then $\dim A$ is even.

Given an abelian variety of Weil type $(A,K)$,
there exists a polarization $\omega_K\in B^1(A)$ on $A$
such that for all $k\in K$ one has
\[
k^*\omega_K = \operatorname{Nm}(k)\omega_K,\qquad \operatorname{Nm}(k) = k\bar{k},
\]
where $\operatorname{Nm}(k)$ is the norm of $k\in K$ (see \cite[Lemma 5.2.1]{hcav}).
We call such a 2-form a polarization of Weil type and
$(A,K,\omega_K)$ is called a polarized abelian variety of Weil type.

\subsection{The Weil classes}
For a general (in particular $A$ is not decomposable) abelian variety of Weil type
$(A,K)$ with dimension $2n$, the spaces of Hodge classes have dimensions
\cite{Weil} (see also \cite[Theorem~6.12]{hcav}):
\[
\dim B^p(A) = 1,\quad p\ne n,\qquad \dim B^n(A) = 3.
\]
Since $\dim B^1(A)=1$, any $\omega\in B^1(A)$, $\omega\ne 0$,
defines (up to sign) a polarization on $A$ which will be of Weil type.
Writing $\omega_K:=\omega$, the $p$-fold exterior product of this class with itself,
denoted by $\omega_K^p\in \wedge^pB^1(A)\subset B^p(A)$, is non-zero
for any $p\leq 2n$.
Using the action of the multiplicative group $K^\times:=K-\{0\}$ on $H^{2n}(A,{\bf Q})$,
there is a natural two-dimensional subspace $W_K$
in $B^n(A)$ which is a complement of ${\bf Q}\omega_K^n$:
\[
B^n(A) = {\bf Q}\omega_K^n \oplus W_K,\qquad \dim W_K = 2.
\]

To define $W_K$ we use that for all $k\in K^\times$
the eigenvalues of $k^*$ on $H^1(A,{\bf Q})$ are $k$, $\bar{k}$,
each with multiplicity $2n$. Let
\[
H^1(A,K)=H^1(A,{\bf Q})\otimes_{\bf Q} K=V_+\oplus V_-,\qquad k^* = \operatorname{diag}\big(k,\bar{k}\big)
\]
be the direct sum decomposition into $2n$-dimensional, conjugate, eigenspaces
$V_\pm$ of $K^\times$.
The eigenvalues of $k^*$ on $H^{1,0}(A)$ (and on $H^{0,1}(A)$) are $k$, $\bar{k}$ each with
multiplicity $n$,
so $V_+$, $V_-$ both have Hodge numbers $h^{1,0}=h^{0,1}=n$.
Since $H^{2n}(A,{\bf Q})=\wedge^{2n}H^1(A,{\bf Q})$, we get
\[
H^{2n}(A,K) = \oplus_{m=0}^{2n} \big({\wedge}^{2n-m}V_+\big)\otimes\big({\wedge}^mV_-\big),
\]
and the eigenvalues of $k^*$ on the $m$-th summand are $k^{2n-m}\bar{k}^m$.
The one-dimensional subspaces $\wedge^{2n}V_+$ and $\wedge^{2n}V_-$
are of Hodge type $(n,n)$
and their direct sum is defined over ${\bf Q}$ since they are conjugate.
That is, there is two-dimensional subspace $W_K\subset H^{2n}(A,{\bf Q})$ such that
\[
W_K\otimes_{\bf Q} K = \wedge^{2n}V_+ \oplus \wedge^{2n}V_-,\qquad
W_K\subset H^{2n}(A,{\bf Q})\cap H^{n,n}(A) = B^n(A).
\]

We give another definition of $W_K$.
Notice that if $k\in K^\times$, and $k^{2n}=a+b\sqrt{-d}\in K$,
with $a,b\in{\bf Q}$, then $k^{2n}$, $\bar{k}^{2n}$ are the roots of the polynomial
$P(T):=T^2-2aT+a^2+db^2\in{\bf Q}[T]$.
Let $W_K$ be the kernel of the linear map $P(k^*)\in \operatorname{End}(H^{2n}(A,{\bf Q}))$,
for any general $k\in K^\times$ (i.e., such that
$k^{2n},k^{2n-1}\bar{k},\dots,\bar{k}^{2n}$ are distinct).
Then $W_K\otimes_{\bf Q} K$ is the span of the eigenspaces of~$k^*$
with eigenvalues $k^{2n}$, $\bar{k}^{2n}$.
Hence $W_K\otimes_{\bf Q} K=\wedge^{2n}V_+\oplus\wedge^{2n}V_-$ as before, so
$\dim_{\bf Q} W_K=2$ and the classes in $W_K$ are of Hodge type~$(n,n)$.

\begin{Definition} \label{KB1B2}
Let $(A,K,\omega_K)$ be a $2n$-dimensional abelian variety of Weil type.
The two-dimensional subspace $W_K=W_{K,A}$ of $B^n(A)$ is called the space of Weil classes
of $(A,K)$.

This subspace is characterized by $W_K\otimes_{\bf Q} K=W_{K,+}\oplus W_{K,-}$ where
$W_{K,+}, W_{K,-}$ are the (one-dimensional) eigenspaces of the $K^\times$ action
on $H^{2n}(A,K)$, with eigenvalues $k^{2n}$, $\bar{k}^{2n}$ respectively, for all $k\in K^\times$
where $K^\times:=K-\{0\}$ be the multiplicative group of $K$.

The space of Hodge classes of $(A,K,\omega_K)$ is
the three-dimensional subspace $B^n_{K,A}$ of the space of Hodge classes $B^n(A)$
defined as
\[
B^n_{K,A} := {\bf Q}\omega_K^n \oplus W_K.
\]
\end{Definition}

\subsection{Families of abelian varieties of Weil type}\label{famwt}
Let $(A,K,\omega_K)$ be a polarized abelian variety of Weil type of dimension $2n$.
Then $A$ is a~complex torus, $A={\bf C}^{2n}/\Lambda$.
Intrinsically, $\Lambda=\pi_1(A)=H_1(A,{\bf Z})$ and ${\bf C}^{2n}=\Lambda\otimes_{\bf Z}{\bf R}=H_1(A,{\bf R})$,
with a~certain complex structure $I\in \operatorname{End}(H_1(A,{\bf R}))$, so $I^2=-{\rm Id}$.
This complex structure induces one on the dual vector space $H^1(A,{\bf R})$ which has eigenspaces
$H^{1,0}(A)$, $H^{0,1}(A)$. These again determine the Hodge decomposition on all $H^k(A)$,
since $H^k(A,{\bf Z})=\wedge^kH^1(A,{\bf Z})$.
The embedding $K\hookrightarrow \operatorname{End}_{\bf Q}(A)$ determines (and is determined by)
an embedding ${K\hookrightarrow \operatorname{End}_{\bf Q}(\Lambda\otimes_{\bf Z}{\bf Q})}$ and the polarization
$\omega_K\in H^2(A,{\bf Q})$ is an alternating 2-form on $\Lambda\otimes_{\bf Z}{\bf Q}$.

To deform $(A,K,\omega_K)$ to other abelian varieties of Weil type, one changes the complex
structure $I$ in such a way that $K$ still acts by ${\bf C}$-linear maps on $\Lambda\otimes_{\bf Z}{\bf R}$
and such that $\omega_K$ still satisfies the Riemann conditions for a polarization
(cf.\ \cite[Section 5]{hcav}). In this way one obtains
an $n^2$-dimensional family of abelian varieties of Weil type parametrized by
a bounded Hermitian domain.

For each $(A',K,\omega_K)$ in this family there is a natural identification
$H^k(A',{\bf Q})=H^k(A,{\bf Q})$ since only the complex structure varies.
As the $K$-action is fixed, both $\omega_K$
(and thus also~$\omega_K^n$) and the space of Weil cycles is fixed
($W_{K,A'}=W_{K,A}$) as is the spaces of Hodge classes of $(A',K,\omega_K)$
($B^n_{K,A'}=B^n_{K,A}={\bf Q}\omega_K^n\oplus W_{K,A}$)
under the identification $H^{2n}(A',{\bf Q})=H^{2n}(A,{\bf Q})$.

In this paper we consider, for a decomposable abelian $2n$-fold $J\times J$, with $J$ general,
and all imaginary quadratic subfields $K\subset \operatorname{End}_{\bf Q}\big(J^2\big)$, the corresponding
three-dimensional spaces $B^n_{K,J^2}\subset B^n\big(J^2\big)$. In case $n=2$, we will see that the
intersection between these subspaces is non-trivial.

\section{Weil classes for decomposable abelian varieties}\label{decaf}

\subsection{General polarized abelian varieties}\label{JJ}
We fix a general abelian variety $J$ of dimension $n$. In particular $J$ has
endomorphism algebra $\operatorname{End}_{\bf Q}(J)={\bf Q}$ and $\dim B^1(J)=1$ (so $J$
has a unique polarization up to scalar multiple).

Let $\omega_J\in B^1(J)$ be the class of an ample divisor, we denote
by $(J,\omega_J)$ the corresponding polarized abelian variety.
We choose a basis of 1-forms ${\rm d} t_j$, $j=1,\dots,2n$, of $H^1(J,{\bf Q})$ such that
\[
\omega_J={\rm d} t_1\wedge{\rm d} t_2+ \dots + {\rm d} t_{2n-1}\wedge {\rm d} t_{2n}.
\]
Then we obtain a basis of
$H^1\big(J^2,{\bf Q}\big)\cong H^1(J,{\bf Q})^{\oplus 2}$ given by the
${\rm d} x_j=\pi_1^*{\rm d} t_j$, ${\rm d} y_j:=\pi_2^*{\rm d} t_j$, $j=1,\dots,2n$, where the
$\pi_i\colon J^2 \rightarrow J$, $i=1,2$, are the projections on the factors.

The following proposition determines the dimensions, as well as bases, for the spaces of
Hodge classes for the product variety $J\times J$.

\begin{Proposition} \label{B2}
For a general abelian $n$-fold $J$ and for $1\leq p\leq n$ we have
\[
\dim B^1\big(J^2\big) = 3,\qquad B^p\big(J^2\big) \cong \operatorname{Sym}^pB^1\big(J^2\big),
\qquad \text{so}\quad\dim B^{p}\big(J^2\big) = \binom{p+2}{2}.
\]
A basis of $B^1(J\times J)$ is given by the following three $2$-forms
\begin{gather*}
\omega_1={\rm d} x_1\wedge{\rm d} x_2 + \dots + {\rm d} x_{2n-1}\wedge {\rm d} x_{2n},\\
\omega_2={\rm d} y_1\wedge{\rm d} y_2 + \dots + {\rm d} y_{2n-1}\wedge {\rm d} y_{2n},\\
\omega_\sigma={\rm d} x_1\wedge{\rm d} y_2 - {\rm d} x_2\wedge {\rm d} y_1 + \dots +
 {\rm d} x_{2n-1}\wedge{\rm d} y_{2n}-{\rm d} x_{2n}\wedge {\rm d} y_{2n-1}.
\end{gather*}
A basis of $B^2(J\times J)$ is thus given by the following six $4$-forms
$($where we write $\omega\theta$ for $\omega\wedge\theta)$:
\[
\omega_1^2, \quad\omega_1\omega_2, \quad
\omega_1\omega_\sigma,\quad
\omega_2^2,\quad
\omega_2\omega_\sigma,\quad \omega_\sigma^2\qquad
\big(\!\!\in B^2\big(J^2\big)\big).
\]
\end{Proposition}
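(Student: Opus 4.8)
The plan is to identify $B^{\bullet}(J^2)$ with a ring of symplectic invariants and then recognize that ring as a symmetric algebra. First I would use the genericity of $J$: its Hodge (Mumford--Tate) group is the full symplectic group $\operatorname{Sp}(V,\omega_J)$ of $V:=H^1(J,{\bf Q})$, where $\omega_J$ is the alternating form giving the polarization. Since $\operatorname{End}_{\bf Q}(J)={\bf Q}$, the Hodge structure on $H^1(J^2,{\bf Q})=V\oplus V$ is the diagonal one, so the Hodge group of $J^2$ is again $\operatorname{Sp}(V)$, acting diagonally. Writing $H^1(J^2,{\bf Q})=V\otimes_{\bf Q}W$ with $W={\bf Q}^2$ the multiplicity space spanned by $e_1,e_2$ dual to $\pi_1,\pi_2$ (on which $\operatorname{Sp}(V)$ acts trivially), and using $H^{2p}(J^2,{\bf Q})=\wedge^{2p}(V\otimes W)$ together with the description of Hodge classes as Hodge-group invariants, I get
\[
B^p(J^2)=\big[{\wedge}^{2p}(V\otimes W)\big]^{\operatorname{Sp}(V)}.
\]

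Next I would settle the case $p=1$ and set up the multiplication map. From $\wedge^2(V\otimes W)=(\wedge^2V\otimes\operatorname{Sym}^2W)\oplus(\operatorname{Sym}^2V\otimes\wedge^2W)$ and the facts that $\operatorname{Sym}^2V$ has no $\operatorname{Sp}(V)$-invariant while the invariants of $\wedge^2V$ form the line ${\bf Q}\omega_J$, one obtains $B^1(J^2)=\omega_J\otimes\operatorname{Sym}^2W\cong\operatorname{Sym}^2W$, of dimension $3$; comparing with the explicit forms shows that the basis $e_1^2,e_2^2,e_1e_2$ of $\operatorname{Sym}^2W$ corresponds to $\omega_1,\omega_2,\omega_\sigma$. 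Because these classes have even degree, cup product defines a homomorphism of graded algebras
\[
\mu\colon\ \operatorname{Sym}^{\bullet}B^1(J^2)\ \longrightarrow\ B^{\bullet}(J^2),
\]
and the proposition is exactly the assertion that $\mu$ is an isomorphism in degrees $p\le n$.

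For the dimension count I would avoid the subtleties of the $\operatorname{GL}\downarrow\operatorname{Sp}$ branching rule outside the stable range by working through the K\"unneth formula. One has $B^p(J^2)=\bigoplus_{i+j=2p}[\wedge^iV\otimes\wedge^jV]^{\operatorname{Sp}(V)}$, and via the symplectic Lefschetz decomposition $\wedge^kV=\bigoplus_{s\ge0}\omega_J^{\,s}\wedge\wedge^{k-2s}_{0}V$ into the irreducibles $\wedge^{m}_{0}V$ (multiplicity free, valid for every $k$ using $\wedge^kV\cong\wedge^{2n-k}V$), each summand equals $\dim\operatorname{Hom}_{\operatorname{Sp}(V)}(\wedge^iV,\wedge^jV)$ (self-dualizing $\wedge^jV$ via $\omega_J$), the number of common constituents, which for $i+j=2p\le2n$ is $\lfloor\min(i,j)/2\rfloor+1$. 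Summing over $i+j=2p$ gives $\binom{p+2}{2}$, so $\dim B^p(J^2)=\binom{p+2}{2}=\dim\operatorname{Sym}^pB^1(J^2)$. Finally, the first fundamental theorem for $\operatorname{Sp}(V)$ in the exterior (fermionic) setting says that every invariant in $\wedge^{\bullet}(V\otimes W)$ is a polynomial in the quadratic invariants $\omega_J\otimes e_ie_j$, i.e.\ $\mu$ is surjective in every degree; a surjection between spaces of equal finite dimension is an isomorphism, which yields the stated monomial basis $\omega_1^a\omega_2^b\omega_\sigma^c$ ($a+b+c=p$) for $p\le n$.

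I expect the main obstacle to be controlling the range $n/2<p\le n$: both the dimension count and the absence of relations must be pushed all the way up to degree $n$, not merely through the deep stable range $2p\le n$ where the symplectic branching is automatic. Concretely, the content is that the first relation among $\omega_1,\omega_2,\omega_\sigma$ appears only in degree $n+1$ --- the symplectic second fundamental theorem (a Pfaffian relation, reflecting $\omega_J^{\,n+1}=0$ but $\omega_J^{\,n}\ne0$). The K\"unneth--Lefschetz bookkeeping above is designed precisely to handle this range cleanly; once surjectivity of $\mu$ (the first fundamental theorem) is granted, the equality of dimensions forces injectivity, so no separate study of the relations is needed.
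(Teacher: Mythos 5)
Your proof is correct, and its overall architecture coincides with the paper's: Hodge classes as $\operatorname{Sp}(V)$-invariants, a K\"unneth--Lefschetz--Schur count giving $\binom{p+2}{2}$ in each degree $p\le n$, then surjectivity of $\operatorname{Sym}^\bullet B^1\big(J^2\big)\to B^\bullet\big(J^2\big)$ plus equality of dimensions. Indeed, the dimension count is the same computation: your $\lfloor\min(i,j)/2\rfloor+1$ is exactly the paper's $d_{\min(i,j)}$, and you are if anything more explicit than the paper about why the count persists in the delicate range $n/2<p\le n$ (handling $j>n$ via $\wedge^jV\cong\wedge^{2n-j}V$). The one genuine difference is the source of surjectivity: the paper quotes Hazama's theorem \cite[Proposition~1.6]{Haz} that the Hodge ring $B^*\big(J^2\big)$ is generated by $B^1\big(J^2\big)$, whereas you invoke the skew (fermionic) first fundamental theorem for $\operatorname{Sp}(V)$ acting on $\wedge^\bullet(V\otimes W)$ --- equivalently, Howe duality for the relevant dual pair, or the tensor FFT for $\operatorname{Sp}$ followed by antisymmetrization, since $\wedge^{2p}(V\otimes W)$ is a direct summand of $(V\otimes W)^{\otimes 2p}$. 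Both are legitimate: your route stays entirely inside classical invariant theory and makes the proposition self-contained modulo a standard reference, while the paper's citation is shorter and is the usual one in the Hodge-theory literature (Hazama's result being precisely tailored to self-products of generic abelian varieties). A minor cosmetic difference: you identify $\omega_\sigma$ with the image of $\omega_J\otimes e_1e_2$ under $\wedge^2V\otimes\operatorname{Sym}^2W\hookrightarrow\wedge^2(V\otimes W)$, while the paper exhibits it geometrically via the addition map ($\sigma^*\omega_J=\omega_1+\omega_2+\omega_\sigma$); either way a small explicit verification is needed, and yours checks out.
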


\begin{proof}Since $J$ is general, the Hodge group, also known as the special Mumford--Tate group,
of $V:=H^1(J,{\bf Q})$ is ${\rm Sp}(V)\cong {\rm Sp}_{2n}$, the symplectic group of $\omega_J$.
The spaces of Hodge classes $B^p\big(J^2\big)$ are obtained as the invariants (for the diagonal action):
\[
B^p\big(J^2\big) = \big({\wedge}^{2p}\big(V^{\oplus 2}\big)\big)^{{\rm Sp}(V)} =
\oplus_{k=0}^{2p} \big(\big({\wedge}^kV\big)\otimes\big({\wedge}^{2p-k}V\big)\big)^{{\rm Sp}(V)}.
\]
Interchanging $\wedge^kV$ and $\wedge^{2p-k}V$ shows that we only need to find the
invariants for $k\leq p$.
The symplectic form $\omega_J$ induces a duality $\wedge^kV\cong\big({\wedge}^kV\big)^*$ and thus
\[
\big(\big({\wedge}^kV\big)\otimes\big({\wedge}^{2p-k}V\big)\big)^{{\rm Sp}(V)} \cong
\operatorname{Hom}_{{\rm Sp}(V)}\big({\wedge}^kV,\wedge^{2p-k}V\big).
\]

For each $k$, with $0\leq k\leq n$,
the ${\rm Sp}(V)$-representation $\wedge^kV$ is a direct sum of $k/2+1$, $(k+1)/2$,
with~$k$ even and~$k$ odd respectively, irreducible ${\rm Sp}(V)$-representations,
each with multiplicity~$1$.
These representation are denoted by $V^{(k-2i)}$, $0\leq i\leq k/2$ in \cite[Theorem~17.5]{FH} and
$\wedge^kV\cong V^{(k)}\oplus\wedge^{k-2}V$.
In this way one obtains the Lefschetz decomposition \cite[p.~122]{GH} of $H^k(J,{\bf Q})=\wedge^kV$.

Notice that the number of these representations depends only on $k$ and not on~$n$.
We denote by $d_k$ this number of irreducible subrepresentations of~$\wedge^kV$,
so $d_k=1,1,2,2,3,3,4,\dots$ for $k=0,1,2,3,4,5,6,\dots$.

If $k\leq p$, $\wedge^kV$ is isomorphic to the ${\rm Sp}(V)$-subrepresentation
$\omega_J^{p-k}\wedge \big({\wedge}^kV\big)\subset \wedge^{2p-k}V$.
By Schur's lemma, the dimension of $\operatorname{Hom}_{{\rm Sp}(V)}\big({\wedge}^kV,\wedge^{2p-k}V\big)$ is then equal to
the number of irreducible ${\rm Sp}(V)$-representations in $\wedge^kV$.
In particular $\dim B^1\big(J^2\big)=1+1+1=3$ and $\dim B^2\big(J^2\big)=1+1+2+1+1=6$.
More generally, for $0\leq k\leq p$,
\[
\dim B^p\big(J^2\big) = \sum_{k=0}^{2p} d_k =
2\sum_{k=0}^{p-1}d_k\;+ d_{p} = \dim B^{p-1}\big(J_{n}^2\big) + d_{p-1} + d_p.
\]
Since $d_{p-1}+d_p=p+1$ and $\dim B^0\big(J^2\big)=1$, we find $\dim B^p\big(J^2\big)=\binom{p+2}{2}$.

According to Hazama \cite[Proposition 1.6]{Haz}
the ring of Hodge classes $B^*\big(J^2\big)=\oplus B^p\big(J^2\big)$ is generated by $B^1\big(J^2\big)$,
so the cup product $\operatorname{Sym}^pB^1\big(J^2\big)\rightarrow B^p\big(J^2\big)$ is surjective. Comparing dimensions we
get $\operatorname{Sym}^pB^1\big(J^2\big)\cong B^p\big(J^2\big)$.

Notice that $\omega_j=\pi_j^*\omega_J$, hence they are in $B^1\big(J^2\big)$. Using the addition map
\[
\sigma \colon \ J\times J \longrightarrow J,\qquad
(x,y)\longmapsto t=x+y,
\]
one finds $\omega_\sigma\in B^1\big(J^2\big)$ from
\[
\sigma^*\omega_J =
\omega_1+\omega_2+\omega_\sigma\qquad\big(\!\!\in B^1\big(J^2\big)\big).
\]
Since $\dim B^1\big(J^2\big)=3$, these three 2-forms are a basis of $B^1\big(J^2\big)$.
The six 4-forms are obviously a basis of $\operatorname{Sym}^2B^1\big(J^2\big)= B^2\big(J^2\big)$.
\end{proof}

\subsection[The GL\_2-action on the Hodge cycles]{The $\boldsymbol{{\rm GL}_2}$-action on the Hodge cycles}\label{GL2Hodge}

As $\operatorname{End}_{\bf Q}(J)={\bf Q}$, we have $\operatorname{End}_{\bf Q}\big(J^2\big)=M_2({\bf Q})$ (we write $(x,y)\in J^2$ as
a column vector).
We will first consider the induced action of ${\rm GL}_2$ on the Hodge cycles $B^p\big(J^2\big)$.
Since for pull-backs, $(gh)^*=h^*g^*$ where $g,h\in M_2({\bf Q})$, to get a representation
of ${\rm GL}_2$ on the cohomology we define
$g\cdot \omega:=({}^tg)^*\omega$ for $\omega\in H^*\big(J^2,{\bf Q}\big)$.
We denote the standard two-dimensional representation of~${\rm GL}_2$ by $V_1$. Then we
have $H^1\big(J^2,{\bf Q}\big)\cong H^1(J,{\bf Q})\otimes V_1$ and the~${\rm GL}_2$ action is on the second factor.

Recall that ${\rm SL}_2$ has a unique irreducible representation of dimension $m$ for any
$m\geq 1$. There is an isomorphism $\operatorname{Sym}^2V_1\cong B^1\big(J^2\big)$ which simplifies some of the arguments in
the rest of the paper.

\begin{Proposition} \label{GLB}\label{decGLB}
The vector spaces $B^p\big(J^2\big)$ are ${\rm GL}_2$-representations
in which the center ${\bf Q}^\times$ of ${\rm GL}_2$ acts by
scalar multiplication by $t^{2p}$ on $B^p\big(J^2\big)$.
As representation for the subgroup ${\rm SL}_2$,
the space $B^1\big(J^2\big)$ is the irreducible three-dimensional
representation of ${\rm SL}_2$. Moreover, if~$e_+$,~$e_-$ are the standard basis of $V_1$, then there
is an isomorphism of ${\rm GL}_2$-representations
\[
\operatorname{Sym}^2V_1 \stackrel{\cong}{\longrightarrow} B^1\big(J^2\big),
\qquad \begin{cases} e_+^2 \longmapsto \omega_1,\\
 e_+e_- \longmapsto \tfrac{1}{2}\omega_\sigma,\\
 e_-^2 \longmapsto \omega_2.
 \end{cases}
\]
The space $B^2\big(J^2\big)=\operatorname{Sym}^2B^1\big(J^2\big)$ is the direct sum of
the two irreducible ${\rm SL}_2$-subrepresen\-ta\-tions~$V_0$, $V_4$ of $B^2\big(J^2\big)$
of dimension $1$ and $5$:
\[
V_0 := \big\langle \Omega := 4\omega_1\omega_2 - \omega_\sigma^2 \big\rangle,
\qquad
V_4 =
\big\langle \omega_1^2,\ \omega_1\omega_\sigma,
2\omega_1\omega_2+\omega_\sigma^2,
\omega_2\omega_\sigma, \omega_2^2 \big\rangle.
\]
\end{Proposition}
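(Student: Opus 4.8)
The plan is to exploit the identification $H^1\big(J^2,{\bf Q}\big)\cong V\otimes V_1$, with $V:=H^1(J,{\bf Q})$ and the ${\rm GL}_2$-action on the second factor, and to reduce every assertion to elementary ${\rm SL}_2$-representation theory plus one explicit coordinate computation. The center is immediate: a scalar $t\in{\bf Q}^\times$ acts trivially on $V$ and by $t$ on $V_1$, hence by $t$ on $H^1\big(J^2,{\bf Q}\big)$; since $B^p\big(J^2\big)\subset\wedge^{2p}H^1\big(J^2,{\bf Q}\big)$, it acts by $t^{2p}$, as claimed.

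To identify $B^1\big(J^2\big)$ I would decompose $\wedge^2(V\otimes V_1)\cong\big(\wedge^2V\otimes\operatorname{Sym}^2V_1\big)\oplus\big(\operatorname{Sym}^2V\otimes\wedge^2V_1\big)$ and take ${\rm Sp}(V)$-invariants. Here $\wedge^2V$ has a unique invariant line (spanned by $\omega_J$), whereas $\operatorname{Sym}^2V$ is the adjoint representation of ${\rm Sp}(V)$, which is irreducible and nontrivial and so has no invariants; thus $B^1\big(J^2\big)\cong\operatorname{Sym}^2V_1$, the irreducible three-dimensional ${\rm SL}_2$-representation. To pin down the stated map on the nose, I would compute the action on $\omega_1,\omega_2,\omega_\sigma$ directly. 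Using the convention $g\cdot\omega=({}^tg)^*\omega$ and the pullback of $dt_j$ along a linear combination of the two projections (computed through the addition map on $J$), one gets, for $g=\begin{pmatrix}a&b\\c&d\end{pmatrix}$, that $g\cdot dx_j=a\,dx_j+c\,dy_j$ and $g\cdot dy_j=b\,dx_j+d\,dy_j$, whence $g\cdot\omega_1=a^2\omega_1+ac\,\omega_\sigma+c^2\omega_2$ together with the analogous formulas for $\omega_2$ and $\omega_\sigma$. Comparing with $g\cdot e_+^2=(ae_++ce_-)^2=a^2e_+^2+2ac\,e_+e_-+c^2e_-^2$ forces $e_+^2\mapsto\omega_1$, $e_-^2\mapsto\omega_2$ and $e_+e_-\mapsto\tfrac12\omega_\sigma$; the factor $\tfrac12$ is precisely what makes the middle coefficients match, and the two remaining transformation formulas serve as the consistency check.

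For $B^2\big(J^2\big)$ I would invoke the ring structure of Proposition~\ref{B2}: $B^2\big(J^2\big)=\operatorname{Sym}^2B^1\big(J^2\big)\cong\operatorname{Sym}^2\big(\operatorname{Sym}^2V_1\big)$, and the standard plethysm $\operatorname{Sym}^2\big(\operatorname{Sym}^2V_1\big)\cong\operatorname{Sym}^4V_1\oplus\operatorname{Sym}^0V_1=V_4\oplus V_0$ gives the splitting into irreducibles of dimensions $5$ and $1$. The invariant line $V_0$ is the image of the unique ${\rm SL}_2$-invariant in $\operatorname{Sym}^2\big(\operatorname{Sym}^2V_1\big)$, the discriminant of the binary quadratic form; translating through $\omega_1=e_+^2$, $\omega_\sigma=2e_+e_-$, $\omega_2=e_-^2$ this is proportional to $4\omega_1\omega_2-\omega_\sigma^2=\Omega$, and I would confirm $g\cdot\Omega=(\det g)^2\Omega$ by substituting the formulas above, which is exactly the classical relative invariance of the discriminant. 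For $V_4$, note that $\omega_1=e_+^2$ is a highest-weight vector, so $\omega_1^2$ has weight $4$ and is annihilated by the raising operator; repeatedly applying the lowering operator then produces, in order, $\omega_1\omega_\sigma$, then $2\omega_1\omega_2+\omega_\sigma^2$, then $\omega_2\omega_\sigma$, then $\omega_2^2$, which is precisely the listed basis of the five-dimensional irreducible $V_4$ and exhibits the middle vector complementary to $\Omega$.

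The representation-theoretic content is routine ${\rm SL}_2$-theory. The only real obstacle is bookkeeping: keeping the transpose convention $g\cdot\omega=({}^tg)^*\omega$, the addition-map pullback, and the factor $\tfrac12$ on $\omega_\sigma$ mutually consistent, since these are what fix the exact coefficients appearing in $\Omega$ and in the basis of $V_4$.
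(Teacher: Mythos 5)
Your proposal is correct, and all of its computations check out (the formula $g\cdot {\rm d} x_j=a\,{\rm d} x_j+c\,{\rm d} y_j$ under the transpose convention, the resulting $g\cdot\omega_1=a^2\omega_1+ac\,\omega_\sigma+c^2\omega_2$, the invariance of $\Omega$, and the lowering-operator chain $\omega_1^2\to\omega_1\omega_\sigma\to 2\omega_1\omega_2+\omega_\sigma^2\to\omega_2\omega_\sigma\to\omega_2^2$ are all verified correctly). The overall strategy matches the paper's -- reduce to elementary ${\rm SL}_2$-theory plus one coordinate computation to fix the factor $\tfrac12$ -- but your route to the structural isomorphism is genuinely different. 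The paper proves irreducibility of $B^1\big(J^2\big)$ by a torus-weight argument ($\operatorname{diag}\big(t,t^{-1}\big)$ acts as $\operatorname{diag}\big(t^2,1,t^{-2}\big)$ on $\omega_1,\omega_\sigma,\omega_2$), then pins down the isomorphism with $\operatorname{Sym}^2V_1$ using the two generators $S$, $T$ of ${\rm SL}_2$ and the single explicit computation $({}^tT)^*\omega_\sigma=2\omega_1+\omega_\sigma$. You instead obtain $B^1\big(J^2\big)\cong\operatorname{Sym}^2V_1$ a priori, as ${\rm GL}_2$-representations, from the decomposition $\wedge^2(V\otimes V_1)\cong\big(\wedge^2V\otimes\operatorname{Sym}^2V_1\big)\oplus\big(\operatorname{Sym}^2V\otimes\wedge^2V_1\big)$ together with the observation that $\operatorname{Sym}^2V\cong\mathfrak{sp}(V)$ is irreducible and nontrivial, hence contributes no ${\rm Sp}(V)$-invariants; this buys a conceptual explanation of \emph{why} $B^1$ is a copy of $\operatorname{Sym}^2V_1$ rather than merely an abstract 3-dimensional irreducible, and your general formula for $g\cdot\omega_1$, $g\cdot\omega_\sigma$, $g\cdot\omega_2$ subsumes the paper's $S$, $T$ computations as special cases. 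For $B^2\big(J^2\big)$ both arguments invoke $\operatorname{Sym}^2B^1\cong\operatorname{Sym}^4V_1\oplus V_0$; where the paper says ``standard Lie algebra computations,'' you actually supply them, identifying $\Omega$ with the discriminant of the binary quadratic form (with its relative invariance $g\cdot\Omega=(\det g)^2\Omega$) and generating the basis of $V_4$ from the highest-weight vector $\omega_1^2$ by lowering operators. The only mild redundancy is that your ${\rm Sp}(V)$-invariant argument partially re-proves what the paper's Proposition \ref{B2} already provides (namely $\dim B^1\big(J^2\big)=3$ and $B^2=\operatorname{Sym}^2B^1$), but that costs nothing and makes the proof more self-contained.
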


\begin{proof}
Since $t\in{\bf Q}^\times$ acts multiplication by $t$ on $H^1\big(J^2,{\bf Q}\big)$ and
$H^k\big(J^2,{\bf Q}\big)=\wedge^kH^1\big(J^2,{\bf Q}\big)$, it acts as $t^{2p}$ on $B^p\big(J^2\big)$.

The diagonal matrix $\operatorname{diag}\big(t,t^{-1}\big)\in {\rm SL}_2$ acts as $\operatorname{diag}\big(t^{2},1,t^{-2}\big)$
on the basis $\omega_1$, $\omega_\sigma$, $\omega_2$ of $B^1\big(J^2\big)$ which proves that $B^1\big(J^2\big)$
is the three-dimensional irreducible representation of~${\rm SL}_2$.
This representation is also isomorphic to $\operatorname{Sym}^2V_1$. Using the eigenvectors of the
diagonal matrices we see that, up to scalar multiples,
the isomorphism is as in the proposition.
We fix the isomorphism by imposing that $e_+^2\mapsto \omega_1$. We define
\[
S := \begin{pmatrix}\hphantom{-}0&1\\-1&0\end{pmatrix},\qquad T := \begin{pmatrix}1&1\\0&1\end{pmatrix}
\qquad\big(\!\!\in {\rm SL}_2).
\]
Then $Se_+=e_-$ and thus $Se_+^2=e_-^2$, therefore $e_-^2$ maps to $S\omega_1=\omega_2$.
Finally we have $Te_+e_-=e_+(e_++e_-)=e_+^2+e_+e_-$ and we know that $e_+e_-$ maps to
$a\omega_\sigma$ for some constant $a$. Thus $T$ maps $a\omega_\sigma$ to $\omega_1+a\omega_\sigma$.
An explicit computation shows that $({}^tT)^*\omega_\sigma=2\omega_1+\omega_\sigma$,
hence $a=1/2$.

From the basis of $B^2\big(J^2\big)$ given in Proposition \ref{B2} one sees that
$B^2\big(J^2\big)=\operatorname{Sym}^2B^1\big(J^2\big)$
and thus $B^2\big(J^2\big)$ is the sum of the irreducible five and one-dimensional representations of ${\rm SL}_2$.
With standard Lie algebra computations one finds the explicit decomposition.
\end{proof}

\subsection[Homomorphisms K hookrightarrow End\_Q(J\^{}2)]{Homomorphisms $\boldsymbol{K\hookrightarrow \operatorname{End}_{\bf Q}\big(J^2\big)}$}\label{KEnd}

For any imaginary quadratic field $K$,
there are embeddings $K\hookrightarrow \operatorname{End}_{\bf Q}\big(J^2\big)\cong M_2({\bf Q})$,
here an embedding is an injective homomorphism of rings. For example,
\[
\phi\colon \ K \hookrightarrow M_2({\bf Q}),\qquad \phi\big(a+b\sqrt{-d}\big) =
\begin{pmatrix}a &-db\\b&a\end{pmatrix}.
\]
Such an embedding gives a homomorphism (of algebraic groups defined over ${\bf Q}$)
$K^\times \rightarrow {\rm GL}_2$.
The image of $K^\times$ is a diagonalizable two-dimensional abelian subgroup of ${\rm GL}_2$,
so it is a Cartan subgroup of ${\rm GL}_2$.

In particular, $K$ has two eigenvectors $v_\pm\in V_1\otimes_{\bf Q} K$ on which it acts
as $k\cdot v_+=kv_+$ and $k\cdot v_-=\bar{k} v_-$. Conversely, given a basis of conjugated
vectors $v_\pm$ of $V_1\otimes_{\bf Q} K$, one finds an embedding $K\hookrightarrow M_2({\bf Q})$.
Such a pair of vectors can be chosen as $v_\pm=(\alpha_\pm,1)\in K^2$ with
$\overline{\alpha_+}=\alpha_-$. We can thus parametrize all embeddings of $K$ by $\alpha_+\in K-{\bf Q}$.

\subsection{Cartan subalgebras}\label{csa}
We identify the Lie algebra of ${\rm GL}_2$ with $M_2$. Then a subfield $K\subset M_2({\bf Q})$ is a
Cartan subalgebra of $M_2$, that is a diagonalizable two-dimensional subalgebra.
More generally, we will consider Cartan subalgebras, still denoted by $K$, in $M_2({\bf C})$.
Such a Cartan subalgebra determines, and is determined by, a basis (of eigenvectors) of
$V_1\otimes_{\bf Q}{\bf C}$. Only rather special bases will correspond to imaginary quadratic subfields
of $M_2({\bf Q})$, but it is still convenient to consider all of them as we do from now on.
The complexification of the space of Hodge cycles
$B^p\big(J^2\big)$ is denoted simply by $B^p$:
\[
B^p := B^p(J\times J)\otimes_{\bf Q}{\bf C}.
\]
We refer to the eigenspaces and eigenvalues of the multiplicative group $K^\times$ of $K$
as the eigen\-spaces and eigenvalues of $K$.

The following lemma collects some explicit computations on the differential forms that are
eigenvectors of a Cartan subalgebra $K\hookrightarrow M_2({\bf C})$,
where $K$ is determined by a pair of independent eigenvectors $v_\pm \in {\bf C}^2$.

\begin{Lemma} \label{lemKB}
Let $v_\pm=(\alpha_\pm,1) \in V_1\otimes_{\bf Q}{\bf C}={\bf C}^2$
and let $K\hookrightarrow M_2({\bf C})$
be the Cartan subalgebra with these eigenvectors.
\begin{enumerate}\itemsep=0pt
\item[$1.$] A basis of eigenvectors of $K$ in $B^1$ is
\[
\omega_\pm := \alpha_\pm^2\omega_1 + \alpha_\pm\omega_\sigma + \omega_2,
\qquad\text{and}\qquad
\omega_K := 2\alpha_+\alpha_-\omega_1 + (\alpha_++\alpha_-)\omega_\sigma +
2\omega_2.
\]

\item[$2.$]
Let $K$ be an imaginary quadratic field. Then the eigenvalues of $K$ on $\omega_+$, $\omega_-$, $\omega_K$
are $k^2$, $\bar{k}^2$, $k\bar{k}$ respectively. Moreover, $\big(J^2,K,\omega_K\big)$
is an abelian variety of Weil type.
The space of Weil cycles $W_K\subset H^{2n}\big(J^2,{\bf Q}\big)$
of $\big(J^2,K,\omega_K\big)$ is spanned $($over $K)$ by $\omega_+^n$ and $\omega_-^n$:
\[
W_K\otimes_{\bf Q} K = K\omega_+^n \oplus K\omega_-^n,
\]
and $B^n_{K,J^2}={\bf Q} \omega_K^n\oplus W_K$ $($cf.\ Definition {\rm \ref{KB1B2})}.
\end{enumerate}
\end{Lemma}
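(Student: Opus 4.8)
The plan is to compute directly how $K$ acts on the basis $\omega_1, \omega_\sigma, \omega_2$ of $B^1$, diagonalize this action, and then bootstrap from $B^1$ to $B^n$. First I would fix the embedding determined by the eigenvectors $v_\pm = (\alpha_\pm, 1)$. Recall from Section \ref{KEnd} that a Cartan subalgebra with these eigenvectors corresponds to a matrix $\phi(k) \in M_2({\bf C})$ whose eigenvectors are $v_\pm$ with eigenvalues $k, \bar k$. Under the identification $H^1(J^2,{\bf Q}) \cong H^1(J,{\bf Q}) \otimes V_1$ from Section \ref{GL2Hodge}, the element $k$ acts only on the $V_1$-factor, so its action on $B^1 \cong \operatorname{Sym}^2 V_1$ (Proposition \ref{GLB}) is $\operatorname{Sym}^2$ of its action on $V_1$. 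Since $v_\pm$ are eigenvectors of $k$ on $V_1$ with eigenvalues $k, \bar k$, the products $v_+^2$, $v_+ v_-$, $v_-^2$ are eigenvectors on $\operatorname{Sym}^2 V_1$ with eigenvalues $k^2$, $k\bar k$, $\bar k^2$. Transporting through the explicit isomorphism $\operatorname{Sym}^2 V_1 \to B^1$ of Proposition \ref{GLB} (which sends $e_+^2 \mapsto \omega_1$, $e_+ e_- \mapsto \tfrac12 \omega_\sigma$, $e_-^2 \mapsto \omega_2$) and writing $v_\pm = \alpha_\pm e_+ + e_-$, I would expand $v_\pm^2$ and $v_+ v_-$ in the basis $e_+^2, e_+e_-, e_-^2$ and read off the images. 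This should yield exactly the stated $\omega_\pm$ and $\omega_K$, establishing part 1 together with the eigenvalue assertion for $B^1$ in part 2.

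For part 2, once $\omega_\pm$ are the $k^2, \bar k^2$ eigenvectors and $\omega_K$ the $k\bar k = \operatorname{Nm}(k)$ eigenvector in $B^1$, the claim $k^*\omega_K = \operatorname{Nm}(k)\omega_K$ is immediate, so $\omega_K$ is a polarization of Weil type and $(J^2, K, \omega_K)$ is of Weil type by the definitions in Section \ref{ab4w}. For the space of Weil cycles I would pass to $H^{2n}(J^2, K) = \wedge^{2n} H^1(J^2, K)$ and use the eigenspace decomposition $H^1(J^2, K) = V_+ \oplus V_-$ into the $k$- and $\bar k$-eigenspaces of $K^\times$, each of dimension $2n$. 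By Definition \ref{KB1B2}, $W_K \otimes_{\bf Q} K = \wedge^{2n} V_+ \oplus \wedge^{2n} V_-$, the eigenspaces with eigenvalues $k^{2n}, \bar k^{2n}$. Since $\omega_+ \in B^1 \subset H^2(J^2, K)$ is a $k^2$-eigenvector, its $n$-fold product $\omega_+^n \in H^{2n}(J^2, K)$ is a $k^{2n}$-eigenvector, and likewise $\omega_-^n$ is a $\bar k^{2n}$-eigenvector. Thus I need only check that $\omega_+^n \ne 0$ and lands in $\wedge^{2n} V_+$ rather than in another summand sharing the eigenvalue; the eigenvalue $k^{2n}$ occurs in the decomposition $H^{2n}(J^2,K) = \oplus_m (\wedge^{2n-m} V_+) \otimes (\wedge^m V_-)$ only for $m=0$, namely on $\wedge^{2n} V_+$, so any nonzero $k^{2n}$-eigenvector lies there. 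The same argument applies to $\omega_-^n$ and $\bar k^{2n}$. The final identity $B^n_{K,J^2} = {\bf Q}\omega_K^n \oplus W_K$ is then just Definition \ref{KB1B2} applied to the Weil variety $(J^2, K, \omega_K)$.

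The main obstacle I anticipate is the nonvanishing of $\omega_+^n$ (and independence of $\omega_+^n, \omega_-^n$), since a priori a high power of a decomposable class could vanish. Here I would argue that $\omega_+$ spans the line $\wedge^2 V_+$ inside $\operatorname{Sym}^2 V_1 \subset \wedge^2 H^1(J^2,K)$, and that $\omega_+^n$ is a nonzero multiple of the generator of the one-dimensional space $\wedge^{2n} V_+$ precisely because $V_+$ is $2n$-dimensional and $\omega_+$ is a nondegenerate alternating form on the $V_-$-direction complement; concretely, writing $V_+$ in a symplectic-type basis one checks $\omega_+$ restricts to a rank-$2n$ form so its top power is nonzero. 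Since $\omega_+^n$ and $\omega_-^n$ lie in the distinct eigenspaces $\wedge^{2n} V_+$ and $\wedge^{2n} V_-$, they are automatically linearly independent over $K$, giving $\dim_K(W_K \otimes K) = 2$ as required. The remaining verifications are routine bilinear-algebra computations that I would not carry out in detail.
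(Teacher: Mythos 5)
Your treatment of part 1 and of the eigenvalue bookkeeping coincides with the paper's proof (expand $v_\pm^2$ and $2v_+v_-$ in the basis $e_+^2$, $e_+e_-$, $e_-^2$ and transport through the isomorphism of Proposition \ref{GLB}). The genuine gap is in the sentence ``the claim $k^*\omega_K=\operatorname{Nm}(k)\omega_K$ is immediate, so $\omega_K$ is a polarization of Weil type.'' This is a non sequitur: the transformation rule says nothing about positivity, and \emph{ampleness} of $\omega_K$ is part of what the lemma asserts ($\big(J^2,K,\omega_K\big)$ is a \emph{polarized} abelian variety of Weil type); note also that $-\omega_K$ satisfies the same rule and is not ample. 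Ampleness is moreover what the paper uses to get $\omega_K^{2n}\neq 0$, which you need anyway for ${\bf Q}\omega_K^n\oplus W_K$ to be three-dimensional, and you never address the nonvanishing of $\omega_K^n$ at all. The paper closes this gap as follows: the $\operatorname{Nm}(k)$-eigenspace of $K^\times$ in $B^1\big(J^2\big)$ is one-dimensional, so $\omega_K$ must be proportional to the Weil-type polarization whose \emph{existence} is guaranteed by \cite[Lemma~5.2.1]{hcav} (see also \cite[Lemma~5.2]{vG}), hence $\omega_K$ is ample up to sign; restricting to the first factor $J\times\{0\}$, where $\omega_2$ and $\omega_\sigma$ pull back to zero and $\omega_K$ restricts to $2\alpha_+\alpha_-\omega_J=2\operatorname{Nm}(\alpha_+)\omega_J$ with $\operatorname{Nm}(\alpha_+)>0$, then fixes the sign. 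Some version of this uniqueness-plus-existence argument (or a direct verification of the Riemann relations) is required; it cannot be waved through as ``immediate from the definitions.''

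On the nonvanishing of $\omega_\pm^n$ your route differs from the paper's and is sound in substance, though misstated: $\omega_+$ does not ``span the line $\wedge^2V_+$'' ($\wedge^2V_+$ has dimension $\binom{2n}{2}$, not $1$). What your follow-up sentence correctly captures is that $\omega_+=\sum_{j\ \mathrm{odd}}u_j\wedge u_{j+1}$ with $u_j:=\alpha_+{\rm d} x_j+{\rm d} y_j$ a basis of $V_+$, so $\omega_+$ is a maximal-rank element of $\wedge^2V_+$ and $\omega_+^n=n!\,u_1\wedge\dots\wedge u_{2n}\neq 0$ spans the line $\wedge^{2n}V_+$; together with the observation that the eigenvalue $k^{2n}$ occurs only on that summand, this is a valid and more elementary argument. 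The paper instead gets nonvanishing for free from the isomorphism $\operatorname{Sym}^nB^1\big(J^2\big)\cong B^n\big(J^2\big)$ of Proposition \ref{B2} (which rests on Hazama's theorem), a one-line deduction but one that invokes heavier input. Either route is acceptable; only the polarization step above needs repair.
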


\begin{proof}Writing $v_\pm=\alpha_\pm e_++e_-$ we get $v_\pm^2=\alpha_\pm^2e_+^2+2\alpha_\pm e_+e_-+e_-^2$,
which are eigenvectors of the $K^\times$ action on $\operatorname{Sym}^2V_1$.
The isomorphism $\operatorname{Sym}^2V_1\cong B^1\big(J^2\big)$ maps these to the 2-forms in this lemma. Similar for
$2v_+v_-$ which is a third eigenvector in $\operatorname{Sym}^2V_1$.

Let $k\in K^\times$, it has eigenvalues $k$, $\bar{k}$ on $v_+$, $v_-$, respectively,
hence the eigenvalues of $k$ on $B^1\big(J^2\big)\cong \operatorname{Sym}^2V_1$ are $k^2$, $\bar{k}^2$, $\operatorname{Nm}(k)$.
The $K^\times$ eigenspace in $B^1\big(J^2\big)$ with eigenvalue $\operatorname{Nm}(k)$ is thus one-dimensional
and therefore $\omega_K$ is unique up to scalar multiple, hence
it is (up to sign) an ample class (see \cite[Lemma~5.2]{vG}).
Restricting $\omega_K$ to the first factor of $J^2$ we conclude that~$\omega_K$ is in fact ample.
Thus $\omega_K^{2n}\neq 0$. So $\big(J^2,K,\omega_K\big)$
is an abelian variety of Weil type.

Since $k^*\omega_+ = k^{2}\omega_+$ we find
$k^*\omega^n_+=k^{2n}\omega_+^n$ and similarly
$k^*\omega^n_-=\bar{k}^{2n}\omega_-^n$.
As $\operatorname{Sym}^nB^1\big(J^2\big)\cong B^n\big(J^2\big)$, the forms $\omega_n^\pm$ are non-zero.
Definition~\ref{KB1B2} then shows that~$W_K$
is spanned by $\omega_+^{n}$ and~$\omega_-^n$.
\end{proof}

By Lemma~\ref{lemKB}\,(2),
any embedding $K\hookrightarrow \operatorname{End}_{\bf Q}\big(J^2\big)$ of an imaginary quadratic field
defines a polarized abelian variety, of dimension $2n$, of Weil type $\big(J^2,K,\omega_K\big)$,
which is unique up to multiplication of $\omega_K$ by a positive rational number.
In particular, the embedding determines an $n^2$-dimensional
family of abelian fourfolds of Weil type as in Section~\ref{famwt}
by changing the complex structure on $H_1\big(J^2,{\bf R}\big)$.
For any member $A$ in this family there is a natural identification of the
three-dimensional vector space of Hodge classes $B^n_{K,A}\subset H^{2n}(A,{\bf Q})$ of $(A,K,\omega_K)$
with $B^n_{K,J^2}$.

More generally, we define analogous complex subspaces in the $B^p$ (the complexification
of the $B^p\big(J^2\big)$) for any Cartan subalgebra.

\begin{Definition} \label{defBpK}
Let $K\hookrightarrow M_2({\bf C})$
be a Cartan subalgebra with eigenvectors $v_\pm=(\alpha_\pm,1) \in V_1\otimes_{\bf Q}{\bf C}={\bf C}^2$.
With the notation from Lemma~\ref{lemKB}\,(1)
we define a two-dimensional subspace of $B^n=B^n\big(J^2\big)\otimes_{\bf Q}{\bf C}$ by
\[
W_K := {\bf C} \omega_+^n \oplus {\bf C} \omega_-^n\qquad (\subset B^n),
\]
and we define a three-dimensional subspace $B^n_K$ of Hodge classes of $J^2$ by
\[
B^n_K := {\bf C}\omega_K^n \oplus W_K\qquad (\subset B^n).
\]
\end{Definition}

\subsection[The invariant conic in B\^{}1]{The invariant conic in $\boldsymbol{B^1}$}\label{invcon}

There is a curious (if well-known) geometrical relation between the three
points in ${\bf P} B^1\cong{\bf P}^2$, where $B^1:=B^1\big(J^2\big)\otimes_{\bf Q} {\bf C}$
determined by the $K^\times$-eigenvectors
$\omega_K, \omega_+, \omega_-\in B^1$.

The Veronese image of ${\bf P} V_1$, where~$V_1$ is the standard representation of ${\rm GL}_2$,
is a conic~$C_2$ in ${\bf P}\big({\operatorname{Sym}}^2 V_1\big)\cong{\bf P} B^1$. This conic is thus invariant under the action
of ${\rm GL}_2$ on ${\bf P} B^1$.
It contains the points $\omega_\pm=v^2_\pm$
where $v_\pm$ are eigenvectors of $K^\times$ acting on $V_1$.
From the Lemma~\ref{lemKB} one then finds:
\[
C_2 = \big\{[a\omega_1+b\omega_\sigma+c\omega_2] \in {\bf P} B^1\colon b^2-ac = 0 \big\}.
\]

Let $p\in {\bf P} B^1$ be the point of intersection of the tangent lines to $C_2$ in the
two points defined~$\omega_\pm$. Since these two points are fixed by the action of $K^\times$
on $B^1$, also the point $p$ is fixed by $K^\times$. Since there are only three fixed
points in $B^1$ by Lemma \ref{lemKB} it follows that $p$ must be the point
defined by~$\omega_K$.

Conversely, given $\omega_K$, one finds $\omega_\pm$ (up to permutation) as the
points of intersection of the (two) lines through $p$ which are tangent to $C_2$
with $C_2$ itself.

\subsection{The invariant conic and Pfaffians} \label{pfaff}
This invariant conic also has another description in terms of the two-forms in
$B^1=B^1\big(J^2\big)\otimes_{\bf Q}{\bf C}$.
The Pfaffian $\operatorname{Pfaff}(\theta)\in{\bf C}$ of a two-form $\theta\in B^1$
is defined by
\[
\wedge^{2n}\theta =
((2n)!)\operatorname{Pfaff}(\theta){\rm d} x_1\wedge\dots\wedge{\rm d} y_{2n}\qquad\big(\!\!\in H^{4n}\big(J^2,{\bf C}\big) \cong {\bf C}\big).
\]
Since the Pfaffian is essentially invariant under ${\rm GL}_2$, its zero locus is a union of
${\rm GL}_2$-orbits in~$B^1$. There are only two ${\rm GL}_2({\bf C})$-orbits on ${\bf P} B^1$
and, since $\operatorname{Pfaff}(\omega_\sigma)\neq 0$ but
$\operatorname{Pfaff}(\omega_1)=0$, for $\theta\in B^1$ we must have $\operatorname{Pfaff}(\theta)=e\big(b^2-ac\big)^n$
for some non-zero constant $e$.

\section[The Hodge classes of (J\^{}2,K) when n=dim J=2]{The Hodge classes of $\boldsymbol{\big(J^2,K\big)}$ when $\boldsymbol{n=\dim J=2}$}\label{limHC}

\subsection[The Hodge classes B\_K\^{}2]{The Hodge classes $\boldsymbol{B_K^2}$}\label{cubic4}

For a general abelian surface $J$,
an embedding of an imaginary quadratic field $K\hookrightarrow \operatorname{End}_{\bf Q}\big(J^2\big)=M_2({\bf Q})$
determines an abelian fourfold of Weil type $\big(J^2,K,\omega_K\big)$.
The space of Hodge classes of $\big(J^2,K,\omega_K\big)$ is the three-dimensional
subspace of $B^2\big(J^2\big)\subset H^4\big(J^2,{\bf Q}\big)$, see Definition \ref{KB1B2},
\[
B^2_{K,J^2} := {\bf Q} \omega_K^2 \oplus W_{K,J^2},\qquad \text{with}\quad
W_{K,J^2}\otimes_{\bf Q} K = K\omega_+^2 \oplus K\omega_-^2.
\]
As observed in Section~\ref{famwt},
$\big(J^2,K,\omega_K\big)$ determines
a four-dimensional family of abelian varieties of Weil type by changing the complex structure
on $H_1\big(J^2,{\bf R}\big)$.
For any $A$ in this family the three-dimensional space of Hodge classes~$B^2_{K,A}$
of $(A,K,\omega_K)$ is the same as $B^2_{K,J^2}$.

To study the subspaces of Hodge classes $B^2_K$ as $K\subset M_2({\bf Q})$
varies over the imaginary quadratic subfields,
it is convenient to complexify and projectivize. In that way,
as $K$ varies over the Cartan subalgebras of $M_2({\bf C})$,
we obtain a two-dimensional family of complex projective planes ${\bf P} B^2_K$,
with $B^2_K$ as in Definition~\ref{defBpK},
in the complex projective space ${\bf P} B^2={\bf P}^5$.

First of all we show that any two ${\bf P} B^2_K$'s have a non-empty intersection.

\begin{Proposition} \label{conicCK}
Given two distinct Cartan subalgebras $K_1$, $K_2$
of $M_2({\bf C})$, the intersection of their spaces of Hodge classes $B^2_{K_1}\cap B^2_{K_2}
\subset B^2$ is one-dimensional and thus ${\bf P} B^2_{K_1}\cap {\bf P} B^2_{K_2}$ is a~point.

Let $K$ be a Cartan subalgebra,
then the union of the points of intersection ${\bf P} B^2_K\cap {\bf P} B^2_{K'}$,
where~$K'$ runs over the Cartan subalgebra's distinct from $K$, is a conic, denoted by~$C_K$,
in~${\bf P} B^2_K$.
\end{Proposition}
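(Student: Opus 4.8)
The plan is to make the Veronese picture of Section~\ref{invcon} completely explicit. Choosing the basis $\omega_1,\omega_\sigma,\omega_2$ of $B^1$ identifies $B^2=\operatorname{Sym}^2B^1$ with the space of symmetric $3\times 3$ matrices, the Veronese locus $\{\omega^2\}$ with the rank-one matrices $\mathbf x\mathbf x^t$, and the invariant conic $C_2$ with $\{Q=0\}$, where $Q(\mathbf x)=b^2-ac$ for $\mathbf x=(a,b,c)$. Under this identification $\omega_\pm=\mathbf v_\pm\mathbf v_\pm^t$ and $\omega_K^2=\mathbf v_K\mathbf v_K^t$ with $\mathbf v_\pm=(\alpha_\pm^2,\alpha_\pm,1)$ and $\mathbf v_K=(2\alpha_+\alpha_-,\alpha_++\alpha_-,2)$, so that $B^2_K=\langle \mathbf v_+\mathbf v_+^t,\ \mathbf v_-\mathbf v_-^t,\ \mathbf v_K\mathbf v_K^t\rangle$ is exactly the space of symmetric forms that are \emph{diagonal} in the basis $\{\mathbf v_+,\mathbf v_-,\mathbf v_K\}$. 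Section~\ref{invcon} records the incidences I will use: $Q(\mathbf v_\pm)=0$ and, since $\mathbf v_K$ is the pole of the chord $\overline{\mathbf v_+\mathbf v_-}$, one has $\mathbf v_K\perp_Q\mathbf v_+$ and $\mathbf v_K\perp_Q\mathbf v_-$, whereas $\mathbf v_+\not\perp_Q\mathbf v_-$.

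Next I would reduce to a normal form. Since $\mathrm{GL}_2$ acts equivariantly on $B^1$, $B^2$, $C_2$ and on the whole family $\{{\bf P} B^2_K\}$, and acts at least $2$-transitively on $C_2\cong{\bf P}^1$, I may move the two points of $K_1$ on $C_2$ to $0$ and $\infty$. Then $\mathbf v_+=(1,0,0)$, $\mathbf v_-=(0,0,1)$, $\mathbf v_K=(0,1,0)$, and $B^2_{K_1}$ becomes the space of diagonal matrices $\operatorname{diag}(d_1,d_2,d_3)$. For a second Cartan with eigenvalue-parameters $\gamma,\delta$ (so $\mathbf w_+=(\gamma^2,\gamma,1)$, $\mathbf w_-=(\delta^2,\delta,1)$, $\mathbf w_{K'}=(2\gamma\delta,\gamma+\delta,2)$), requiring $\operatorname{diag}(d_1,d_2,d_3)=a\,\mathbf w_+\mathbf w_+^t+b\,\mathbf w_-\mathbf w_-^t+c\,\mathbf w_{K'}\mathbf w_{K'}^t$ amounts to the vanishing of the three off-diagonal entries, i.e.\ to $M(a,b,c)^t=0$ with
\[
M=\begin{pmatrix}\gamma^3&\delta^3&2\gamma\delta(\gamma+\delta)\\ \gamma^2&\delta^2&4\gamma\delta\\ \gamma&\delta&2(\gamma+\delta)\end{pmatrix}.
\]

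The crux is the identity $\det M\equiv 0$ (a two-line expansion, after factoring out $2\gamma^2\delta^2$). This is the entire content of the first assertion: two planes of the form ${\bf P} B^2_K$ would generically be disjoint in ${\bf P}^5$, and it is exactly the pole--chord incidences above that force the system to degenerate and the planes to meet. For $K_1\neq K_2$ the matrix $M$ has rank precisely $2$, so $(a,b,c)$, and hence the intersection point, is unique; and ${\bf P} B^2_{K_1}={\bf P} B^2_{K_2}$ forces $K_1=K_2$ because $W_K=B^2_K\cap V_4$ (with $V_4=\ker\ell$ for the invariant functional determined by $\ell(\omega^2)=Q(\omega)$) recovers the two points $\omega_\pm$ on $C_2$, hence recovers $K$. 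Solving the rank-$2$ system gives $(a,b,c)\propto(-2\delta^2,-2\gamma^2,\gamma\delta)$, and reading off the diagonal entries yields the intersection point $[d_1:d_2:d_3]=[-2\gamma^2\delta^2:\gamma\delta:-2]$ in the basis $\omega_+^2,\omega_K^2,\omega_-^2$.

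For the second assertion I would let $K'$ vary. Writing $\tau=\gamma\delta$, the point is $[-2\tau^2:\tau:-2]$, and as $K'$ ranges over all Cartans $\tau$ ranges over all of ${\bf P}^1$. The map $\tau\mapsto[-2\tau^2:\tau:-2]$ is given by three independent quadratics, so its image is a smooth conic; eliminating $\tau$ it is $\{d_1d_3=4d_2^2\}\subset{\bf P} B^2_{K_1}$, passing through $\omega_+^2$ and $\omega_-^2$ but not through $\omega_K^2$. Transporting back by the $\mathrm{GL}_2$-element used to normalise shows that for a general $K$ the union of the intersection points is a conic $C_K$ in ${\bf P} B^2_K$. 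I expect the one genuine obstacle to be the identity $\det M\equiv 0$: everything else is bookkeeping, but that identity is precisely where the special geometry of the triangles $\{\omega_+,\omega_-,\omega_K\}$ (as opposed to arbitrary triples) must enter, and the tempting shortcut via a common invariant quadric and its ruling fails because $\omega_+$ and $\omega_-$ are not conjugate for $Q$.
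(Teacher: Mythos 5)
Your proof is correct and is essentially the paper's own argument: both normalize $K_1$ to the diagonal Cartan via the transitive ${\rm GL}_2$-action, characterize membership in $B^2_{K_1}$ by the vanishing of the three cross-term coefficients of $\omega_1\omega_\sigma$, $\omega_2\omega_\sigma$, $\omega_1\omega_2$ (your off-diagonal entries), deduce from the rank-two property of the resulting $3\times 3$ system that each intersection $B^2_{K_1}\cap B^2_{K'}$ is exactly one-dimensional, and let $K'$ vary to sweep out the conic $4b^2=ac$ in ${\bf P} B^2_{K_1}$. The only noteworthy difference is bookkeeping: your intersection point $\big[{-}2\gamma^2\delta^2:\gamma\delta:-2\big]$ is the correct expression in the coordinates $\big(\omega_1^2,\omega_\sigma^2,\omega_2^2\big)$, whereas the generator displayed in the paper's proof, $2\alpha_-^2\beta_+^2\omega_1^2-\alpha_+\alpha_-\beta_+\beta_-\omega_\sigma^2+2\alpha_+^2\beta_-^2\omega_2^2$, is (up to sign) the coefficient vector of that same point with respect to the $K'$-eigenbasis $\big(\omega_+^2,\omega_{K'}^2,\omega_-^2\big)$ rather than the stated basis; since both vectors satisfy $4b^2=ac$, this discrepancy does not affect the conclusion.
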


\begin{proof}The action of ${\rm GL}_2({\bf C})$ on the Cartan subalgebras by conjugation is transitive,
since any element in ${\rm GL}_2({\bf C})$ that maps the eigenvectors of~$K_1$ to those of~$K_2$
will conjugate~$K_1$ to~$K_2$.
So we may assume that $K$ is the Cartan subalgebra of diagonal matrices.
The corresponding eigenvectors in~$B^1$ are
$\theta_+:=\omega_1$, $\theta_-:=\omega_2$
and $\theta_K:=\omega_\sigma$ (where we wrote $\theta$ rather than $\omega$),
so they are the standard basis of~$B^1$.
Therefore~$B^2_K$ is spanned by $\omega_1^2$, $\omega_\sigma^2$, $\omega_2^2$
and an element in~$B^2$ lies in~$B^2_K$ iff in the standard basis of~$B^2$ (given in
Proposition \ref{B2}) the coefficients of
$\omega_1\omega_\sigma$, $\omega_2\omega_\sigma$, $\omega_1\omega_2$ are zero.

For the Cartan subalgebra $K'$, $K'\neq K$, determined by $v_\pm:=(\alpha_\pm,\beta_\pm)\in{\bf C}^2$,
the eigenforms $\omega_\pm$, $\omega_{K'}$ are given in Lemma \ref{lemKB}
(we assumed there that $\beta_\pm=1$ but it is easy to homogenize the expressions).
Computing their squares, one finds that the $3\times 3$ matrix
of their coefficients of
$\omega_1\omega_\sigma$, $\omega_2\omega_\sigma$, $\omega_1\omega_2$ has rank two.
It is then easy to find the intersection:
\[
B^2_K\cap B^2_{K'} =
\big\langle 2\alpha_-^2\beta_+^2\omega_1^2 -
\alpha_+\alpha_-\beta_+\beta_-\omega_\sigma^2 + 2\alpha_+^2\beta_-^2\omega_2^2 \big\rangle.
\]
In particular, the corresponding point $a\omega_1^2+b\omega_\sigma^2+c\omega_2^2\in {\bf P} B_K^2$
lies on the conic $C_K$ defined by $4b^2=ac$.
(This conic is not the one defined by the condition $\theta^2=0$,
in fact $\big(a\omega_1^2+b\omega_\sigma^2+c\omega_2^2\big)^2=0$ in $H^8\big(J^2\big)$ iff $3b^2=-ac$.)
\end{proof}

\subsection[The union Z of the planes of Hodge classes PB\_K\^{}2]{The union $\boldsymbol{Z}$ of the planes of Hodge classes $\boldsymbol{{\bf P} B_K^2}$}\label{unz}

Using the basis for $B^2\big(J^2\big)$ given in Proposition~\ref{B2},
any element $\theta \in B^2$ in its complexification is a linear combination
\[
\theta =
x_0\omega_1^2+x_1\omega_1\omega_2+ x_2\omega_2^2 + x_3\omega_1\omega_\sigma +
x_4\omega_2\omega_\sigma + x_5\omega_\sigma^2\qquad\big(\!\!\in B^2\big),
\]
with $(x_0,\dots,x_5)\in{\bf C}^6$.
The following theorem identifies the union $Z$ of all the projective planes ${\bf P} B^2_K$
as $K$ varies over the Cartan subalgebras of $M_2({\bf C})$.
For dimension reasons we expect all these projective planes to be contained in a hypersurface.
It should be noticed that the surface $S=\operatorname{Sing}(Z)$ is \emph{not} the Veronese surface
of ${\bf P} B^1$.

\begin{Theorem} \label{thmz}
Let $Z$ be the Zariski closure in ${\bf P} B^2\cong{\bf P}^5$ of the union
$\cup_K {\bf P} B^2_K$, where the union is over all
diagonalizable $2$-dimensional Cartan subalgebras $K\hookrightarrow M_2({\bf C})$:
\[
Z := \overline{\cup_K {\bf P} B^2_K}\qquad\big(\!\!\subset {\bf P} B^2 \cong {\bf P}^5\big).
\]

Then $Z$ is a cubic fourfold in ${\bf P} B^2$ defined by
the determinant of a symmetric matrix $M_G$:
\[
Z = (G=0),\qquad
G = \det M_G,\qquad
M_G := \left(\begin{matrix}
2x_0&-x_3&x_1-4x_5\\
-x_3&x_1&x_4\\
x_1-4x_5&x_4&2x_2
\end{matrix}\right).
\]
The singular locus of $Z$ is a Veronese surface $S\;(\cong{\bf P}^2)$ and $Z$ is the secant variety of $S$.
A~parametrization of $S$ is given by
\begin{gather*}
\nu' \colon \ {\bf P}^2 \stackrel{\cong}{\longrightarrow} S := \operatorname{Sing}(Z),\\
(s:t:u)\longmapsto (x_0:\dots:x_5):=
\big(s^2: 2u^2:t^2: -2us: 2ut: \tfrac{1}{2}\big({-}st + u^2\big)\big).
\end{gather*}
The intersection of the space of Hodge classes $B^2_K$ of $\big(J^2,K\big)$ with $S$ is the conic
$C_K$ from Proposition~{\rm \ref{conicCK}}:
\[
{\bf P} B^2_K \cap S = C_K.
\]
\end{Theorem}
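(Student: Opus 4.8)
The plan is to identify $B^2$ with the space of symmetric $3\times 3$ matrices, which I write $\operatorname{Sym}^2{\bf C}^3$, through the assignment $\theta\mapsto M_G(\theta)$, and then to recognise $\cup_K{\bf P} B^2_K$ as the locus of matrices of rank at most two. Reading off the six entries of $M_G$ shows that $(x_0,\dots,x_5)$ can be solved for in terms of them, so $\theta\mapsto M_G(\theta)$ is a linear isomorphism $B^2\stackrel{\sim}{\longrightarrow}\operatorname{Sym}^2{\bf C}^3$. Under any such identification one has the classical stratification by rank: the rank-one matrices $w\,{}^tw$ form a Veronese surface, the rank-$\le 2$ matrices form the determinantal cubic $\{\det M_G=0\}$, and this cubic is the secant variety of the Veronese with the Veronese as its singular locus (there $\nabla\det=\operatorname{adj}$ vanishes precisely on the rank-one locus). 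So it suffices to match $\cup_K{\bf P} B^2_K$ with the rank-$\le 2$ locus.

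The heart of the argument is a direct computation with the eigenforms of Lemma~\ref{lemKB}. For the Cartan subalgebra $K$ with $v_\pm=(\alpha_\pm,1)$ I would evaluate $M_G$ on the three squares spanning $B^2_K$ and verify
\[
M_G\big(\omega_\pm^2\big)=2\,w_\pm\,{}^tw_\pm,\qquad M_G\big(\omega_K^2\big)=4\big(w_+\,{}^tw_-+w_-\,{}^tw_+\big),\qquad w_\pm:=\big(\alpha_\pm^2,-\alpha_\pm,-1\big).
\]
The vectors $w_\pm$ lie on the single conic $\Gamma:=\{w_2^2+w_1w_3=0\}$ in the auxiliary ${\bf P}^2$ with coordinates $(w_1:w_2:w_3)$, and the three matrices above span $\operatorname{Sym}^2\langle w_+,w_-\rangle$, the symmetric matrices with image contained in the $2$-plane $\langle w_+,w_-\rangle$. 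Thus $M_G\big({\bf P} B^2_K\big)$ is exactly the ${\bf P}^2$ of such matrices, all of rank at most two, giving ${\bf P} B^2_K\subset\{\det M_G=0\}$.

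For the reverse inclusion I would use that a general line in the $w$-plane meets the smooth conic $\Gamma$ in two distinct points, so the secant $2$-planes $\langle w_+,w_-\rangle$ are dense among all $2$-planes of ${\bf C}^3$ as $K$ varies; since every symmetric matrix of rank at most two lies in $\operatorname{Sym}^2\Pi$ for some $2$-plane $\Pi$, the union is dense in the rank-$\le 2$ locus and its closure $Z$ is the whole irreducible cubic fourfold $\{\det M_G=0\}$. The singular-locus and secant-variety claims are then the classical facts recalled above, and the computation that $M_G\big(\nu'(s:t:u)\big)$ is the rank-one matrix $2\,w\,{}^tw$ with $w=(s,u,t)$ identifies $\operatorname{Sing}(Z)=S$ with the Veronese and confirms the stated parametrization. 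Finally, ${\bf P} B^2_K\cap S$ consists of the rank-one forms in $\operatorname{Sym}^2\langle w_+,w_-\rangle$, namely the conic $\{[w\,{}^tw]\colon w\in\langle w_+,w_-\rangle\}$; intersecting $B^2_K$ with a second $B^2_{K'}$ forces the image into the line $\langle w_+,w_-\rangle\cap\langle w'_+,w'_-\rangle$, hence a single rank-one point, and as $K'$ varies these points trace out exactly this conic, which is therefore the $C_K$ of Proposition~\ref{conicCK}.

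The main obstacle is the key computation together with the reason $M_G$ is the correct matrix: it is the ${\rm SL}_2$-equivariant identification of $B^2=\operatorname{Sym}^2\big(\operatorname{Sym}^2V_1\big)$ with $\operatorname{Sym}^2{\bf C}^3$ for which the eigenform squares land on secant $2$-planes of the one conic $\Gamma$. Crucially this $M_G$ is \emph{not} induced by any linear isomorphism $B^1\to{\bf C}^3$ (such a map would send every square $u^2$ to a rank-one matrix), which is precisely why $S$ is not the Veronese surface of ${\bf P} B^1$. Once that computation is in place, the secant-line-of-a-conic argument and the standard geometry of symmetric determinantal cubics make the remaining assertions routine.
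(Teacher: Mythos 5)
Your proposal is correct, and it takes a genuinely different route from the paper's proof. I checked your three matrix identities and they do hold: with $w_\pm=\big(\alpha_\pm^2,-\alpha_\pm,-1\big)$ one gets $M_G\big(\omega_\pm^2\big)=2\,w_\pm\,{}^tw_\pm$ and $M_G\big(\omega_K^2\big)=4\big(w_+\,{}^tw_-+w_-\,{}^tw_+\big)$, and $M_G(\nu'(s:t:u))=2\,w\,{}^tw$ with $w=(s,u,t)$; so your identification of ${\bf P} B^2_K$ with ${\bf P}\operatorname{Sym}^2\langle w_+,w_-\rangle$, and of $\cup_K{\bf P} B^2_K$ with a dense subset of the rank-$\le 2$ locus, is sound, and the remaining claims indeed reduce to the classical geometry of the symmetric determinantal cubic (gradient of $\det$ equals the adjugate, secant variety of the Veronese). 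The paper instead finds $G$ and $\operatorname{Sing}(Z)$ by a computer calculation and then argues intrinsically: the hyperplane $H={\bf P} V_4$ cuts $Z$ in the secant variety of the rational normal quartic $C_4=\nu_2(C_2)$, hence $Z$ is a cubic; any secant line of $\operatorname{Sing}(Z)$ meets $Z$ with multiplicity $\ge 4$, so $Z$ is the secant variety of $\operatorname{Sing}(Z)$; the surface $\operatorname{Sing}(Z)$ is then recognized as a Veronese via its degree and the classification in \cite{GH}; and $C_K$ is computed from the $2\times 2$ minors on the diagonal-Cartan plane $x_1=x_3=x_4=0$ and propagated by ${\rm SL}_2$-equivariance. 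What your approach buys: it is entirely hand-checkable, it explains conceptually where $M_G$ comes from (it linearizes the rank stratification, sending eigenform squares onto secant planes of one fixed conic $\Gamma$), and it avoids both the computer step and the citation to \cite{GH}. What the paper's approach buys: the auxiliary objects it introduces ($H$ and the curve $C_4$ as intersection of the two Veronese surfaces) are exactly what gets reused later, in Lemma~\ref{quarticc4} and the procedure of Section~\ref{findB2p} for recovering $K$ from a Hodge class. Two small points to tidy in a final write-up: your computation covers Cartans with eigenvectors $(\alpha_\pm,1)$, so the remaining ones (an eigenvector $(1,0)$) should be included either by homogenizing to $w=\big(\alpha^2,-\alpha\beta,-\beta^2\big)$ or by a closure argument (harmless, since $\{\det M_G=0\}$ is closed and such Cartans form a lower-dimensional family); and in the last step one should say that the points ${\bf P} B^2_K\cap{\bf P} B^2_{K'}$ sweep out a \emph{dense} subset of the rank-one conic, whose closure is therefore the conic $C_K$ of Proposition~\ref{conicCK} --- the same mild closure convention the paper itself uses.
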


\begin{proof}
For a~general Cartan subalgebra $K$ we may assume that its eigenvectors
are of the form $v_\pm=(\alpha_\pm,1)\in{\bf C}^2$.
These eigenvectors determine the classes $\omega_\pm,\omega_K\in B^1$ as in
Lemma \ref{lemKB}.
Any element in $B^2_K$ can then be written as
\[
\omega(\alpha_+,\alpha_-,r,s,t) := r\omega_+^2 + s\omega^2_K + t\omega^2_-\qquad
\big(\!\!\in B_K^2\subset B^2\big),
\]
for a certain $(r,s,t)\in {\bf C}^3$. So the problem is to determine the image of ${\bf C}^5$,
with coordinates $\alpha_+$, $\alpha_-$, $r$, $s$, $t$, in ${\bf P} B^2$ under this map.
We used a computer to find the polynomial $G$ defining~$Z$ and the singular locus of $Z$.
(The `difference' between the Veronese surface of ${\bf P} B_1$ in ${\bf P} B^2$, which is
$\big\{\omega^2\in{\bf P} B^2\colon \omega\in B_1\big\}$ and which lies in~$Z$,
and $S$ is only in the component along $V_0$ in the decomposition
$B^2\big(J^2\big)=V_4\oplus V_0$ in Proposition~\ref{GLB}.)
Choosing $K$ to be the diagonal Cartan subalgebra as in the proof of Proposition~\ref{conicCK},
the plane ${\bf P} B^2_K$ is defined by $x_1=x_3=x_4=0$. The $2\times 2$ minors of~$M_G$,
which define~$S$, restrict to multiples of $4x_5^2-x_0x_2$, which is thus the equation of~$C_K$.
Using the ${\rm SL}_2$-action, the same result then holds for any $K$.

More intrinsically, the second Veronese map $\nu_2\colon {\bf P} B^1\rightarrow{\bf P} B^2$, with
$\nu_2(\omega)=\omega^2$, maps the invariant conic $C_2\subset{\bf P} B^1$ to
a quartic rational normal curve $C_4=\nu_2(C_2)$ which is the intersection of the Veronese surface
$\nu_2\big({\bf P} B^1\big)$ with a hyperplane $H\subset{\bf P} B^2$ (this hyperplane is the projectivization of
the subrepresentation~$V_4$).
The plane ${\bf P} B^2_K$ is spanned by $\omega_K^2$, $\omega_+^2$, $\omega_-^2$ where
$\omega_+,\omega_-\in C_2$. Hence ${\bf P} B^2_K$ intersects $H$ in the line spanned by
$\omega_+^2$, $\omega_-^2$. Thus $Z\cap H$ is the secant variety of~$C_4$, which is a cubic
(determinantal) threefold
(cf.\ \cite[Proposition~9.7]{Harris}). Therefore $Z$ is a cubic fourfold.

Since the planes ${\bf P} B^2_K$ intersect in points, the variety $Z$ is singular
and $\dim\operatorname{Sing}(Z)\geq 2$.
Any secant line of $\operatorname{Sing}(Z)$ intersects $Z$ with multiplicity at least four,
hence it lies in $Z$. Thus $Z$ is the secant variety of $\operatorname{Sing}(Z)$. The intersection
of $\operatorname{Sing}(Z)$ with $H$ is the curve~$C_4$, hence $\operatorname{Sing}(Z)$ is a surface of degree four
in ${\bf P}^5$. Since $Z$ contains the Veronese surface of~${\bf P} B^1$, it cannot be the secant variety
of a rational normal scroll and thus it is the secant variety of a Veronese surface by
\cite[Proposition, p.~525]{GH}.
\end{proof}

\begin{Remark} \label{linesLK}
For a Cartan subalgebra $K$, the conic $C_K=S\cap {\bf P} B^2_K$
is a conic in the Veronese surface $S=\nu'\big({\bf P}^2\big)$
and thus $C_K=\nu'(L_K)$ for a line $L_K\subset {\bf P}^2$. If $K'$ is another Cartan subalgebra,
then the lines $L_K$ and $L_{K'}$ intersect in a point $p\in {\bf P}^2$. Then
$\nu'(p)\in C_K\cap C_{K'}\subset {\bf P} B^2_K\cap {\bf P} B^2_{K'}$. Thus we find another proof, besides
Proposition~\ref{conicCK}, of the fact that any two planes $B^2_K$ intersect,
see also Section~\ref{cayleyclasses}.
\end{Remark}

\subsection[The rational curve C\_4 in the two Veronese surfaces]{The rational curve $\boldsymbol{C_4}$ in the two Veronese surfaces}

In ${\bf P} B^2$ there are two `natural' Veronese surfaces.
One is the image of $\nu_2\colon {\bf P} B^1\rightarrow {\bf P} \operatorname{Sym}^2\big(B^1\big)\allowbreak ={\bf P} B^2$ and the other is
the singular locus~$S$ of $Z$ which is the image of a~Veronese map $\nu'\colon {\bf P}^2\allowbreak \rightarrow {\bf P} B^2$.
The intersection of these two surfaces turns out to be a smooth rational
curve which we denote by~$C_4$.
Given a general point $c\in \cup_K {\bf P} B^2_K\subset Z$, this curve allows us to determine
the Cartan algebra $K$ such that $c\in B^2_K$, see Section~\ref{findB2p}.
We apply this in Section~\ref{cycles}.

\begin{Lemma}\label{quarticc4}\label{C4B2p}
The intersection of the two Veronese surfaces $\nu_2\big({\bf P} B^1\big)$ and $\nu'\big({\bf P}^2\big)$
is a smooth rational normal curve~$C_4$ of degree four
\[
C_4 = \nu_2\big({\bf P} B^1\big) \cap \nu'\big({\bf P}^2\big).
\]
The curve $C_4$ is the image under $\nu_2$ and $\nu'$ of the following conics:
\[
C_4 = \nu_2(C_2) = \nu'\big(\big\{(s:t:u)\in{\bf P}^2\colon u^2=st \big\}\big),
\]
where $C_2\subset{\bf P} B^1$ is the invariant conic.
The intersection of $C_4$ and ${\bf P} B^2_K$, for a Cartan subalgebra $K\subset M_2({\bf C})$,
consists of the two eigenvectors of $K^\times$ in ${\bf P} B^2_K$:
\[
C_4 \cap {\bf P} B^2_K = \big\{\omega_+^2, \omega_-^2 \big\}.
\]
\end{Lemma}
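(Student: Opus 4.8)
The plan is to reduce the whole statement to the rank stratification of the symmetric matrix $M_G$ from Theorem~\ref{thmz}: the cubic $Z=(\det M_G=0)$ is the locus where $M_G$ has rank $\le 2$, and $S=\operatorname{Sing}(Z)$ is exactly the locus where $M_G$ has rank $\le 1$. First I would parametrize the Veronese surface $\nu_2({\bf P} B^1)$ by squaring a general class of $B^1$: writing a point of ${\bf P} B^1$ as $[a\omega_1+b\omega_\sigma+c\omega_2]$ and using $B^2=\operatorname{Sym}^2 B^1$ (Proposition~\ref{B2}), one gets, in the coordinates $(x_0:\dots:x_5)$ of Section~\ref{unz},
\[
\nu_2([a:b:c])=\big(a^2:2ac:c^2:2ab:2bc:b^2\big).
\]
Substituting this into $M_G$ and computing the $2\times 2$ minors, the key point is that every such minor is a scalar multiple of $b^2-ac$. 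Hence $\nu_2([a:b:c])\in S$ (that is, $\operatorname{rank}M_G\le1$) if and only if $b^2=ac$, i.e.\ if and only if $[a:b:c]\in C_2$ (Section~\ref{invcon}). This establishes both the identity $C_4:=\nu_2({\bf P} B^1)\cap\nu'({\bf P}^2)=\nu_2(C_2)$ and its consistency with the description $C_4=S\cap H$ from Theorem~\ref{thmz}, since on the image one checks $x_1=2x_5$, the equation of $H={\bf P} V_4$ (Proposition~\ref{GLB}).

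Next I would record the geometry of $C_4=\nu_2(C_2)$. The invariant conic $C_2\cong{\bf P}^1$ is smooth and $\nu_2$ is a closed embedding, so $C_4$ is smooth and rational. Parametrizing $C_2$ by $[a:b:c]=[\alpha^2:\alpha\beta:\beta^2]$, the six coordinates of $\nu_2(C_2)$ become the degree-four forms $\alpha^4,2\alpha^2\beta^2,\beta^4,2\alpha^3\beta,2\alpha\beta^3,\alpha^2\beta^2$; since the five monomials $\alpha^4,\alpha^3\beta,\alpha^2\beta^2,\alpha\beta^3,\beta^4$ are independent, $C_4$ spans the hyperplane $H$ and is a rational normal quartic. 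To exhibit $C_4$ as $\nu'$ of a conic I would use that on the rank-one locus $M_G=w\,{}^tw$; reading the point of ${\bf P}^2$ off the factor $w$ and imposing $b^2=ac$ identifies the preimage under $\nu'$ with the conic in the statement.

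Finally I would treat $C_4\cap{\bf P} B^2_K$. By Section~\ref{invcon} the eigenvectors $\omega_\pm$ of $K^\times$ lie on $C_2$, so $\omega_\pm^2=\nu_2(\omega_\pm)\in C_4$, while by Definition~\ref{defBpK} they also lie in ${\bf P} B^2_K$; thus $\{\omega_+^2,\omega_-^2\}\subseteq C_4\cap{\bf P} B^2_K$. For the reverse inclusion I would show $\omega_K\notin C_2$: a direct evaluation using Lemma~\ref{lemKB} gives $b^2-ac=(\alpha_+-\alpha_-)^2\ne0$, so $\omega_K\notin C_2$, and the same computation shows $\omega_K^2\notin H$ (its coordinates satisfy $x_1-2x_5\ne0$). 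Since ${\bf P} B^2_K=\langle\omega_K^2,\omega_+^2,\omega_-^2\rangle$ has exactly two of its three spanning points on $H$ and one off it, the intersection ${\bf P} B^2_K\cap H$ is the line $L$ through $\omega_+^2$ and $\omega_-^2$; as $C_4\subset H$ this gives $C_4\cap{\bf P} B^2_K=C_4\cap L$. A line meets a rational normal quartic in at most two points (no three points of a rational normal curve are collinear), so the intersection is precisely $\{\omega_+^2,\omega_-^2\}$.

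I expect the main obstacle to be the minor computation of the first step, namely verifying that all $2\times 2$ minors of $M_G$ restricted to $\nu_2({\bf P} B^1)$ are proportional to $b^2-ac$; this is what pins down the intersection of the two Veronese surfaces as a single conic's image, and everything afterwards is standard rational-normal-curve geometry. The one further point demanding care is the precise sign in the equation of the preimage conic, which has to be read off cleanly from the factorization $M_G=w\,{}^tw$.
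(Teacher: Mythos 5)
Your outline is correct in both of its key steps, and at the two places where the paper's own proof is least self-contained it takes a genuinely different route. For the intersection of the two Veronese surfaces, the paper parametrizes $C_2$ by $[x^2:xy:y^2]$ and the plane conic by $(x^2:y^2:xy)$, checks that the two images agree, and then invokes a \emph{computer} calculation to exclude further intersection points; you instead restrict the $2\times 2$ minors of $M_G$ to $\nu_2\big({\bf P} B^1\big)$. I checked this computation: with $\nu_2([a:b:c])=\big(a^2:2ac:c^2:2ab:2bc:b^2\big)$ the six essentially distinct minors equal $b^2-ac$ times, up to sign, $4a^2$, $8ab$, $4ac$, $16b^2$, $8bc$, $4c^2$, and these coefficients have no common zero on ${\bf P}^2$, so indeed $\nu_2\big({\bf P} B^1\big)\cap S=\nu_2(C_2)$ exactly. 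This replaces the paper's computer verification by a hand computation, which is a genuine improvement. For the final assertion, the paper argues via the $K^\times$-action: $C_4\cap {\bf P} B^2_K$ is a union of $K^\times$-orbits in $C_4$, and the open orbit cannot meet ${\bf P} B^2_K$ since its closure $C_4$ spans a ${\bf P}^4$; only the two fixed points $\omega_\pm^2$ remain. Your argument --- $\omega_K^2\notin H$ because $x_1-2x_5=2\big(ac-b^2\big)=-2(\alpha_+-\alpha_-)^2\neq 0$, hence ${\bf P} B^2_K\cap H$ is the line $\big\langle \omega_+^2,\omega_-^2\big\rangle$, and a line meets a rational normal quartic in at most two points --- is equally valid; it trades the group-theoretic orbit argument for classical projective geometry of rational normal curves, and both ultimately rest on the same fact that $C_4$ spans $H\cong{\bf P}^4$.

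One point where your method, executed honestly, will \emph{not} reproduce the statement as printed: the sign of the preimage conic. On $\nu_2(C_2)$ the rank-one factorization is $M_G=2\,{}^tw\,w$ with $w=(a,-b,-c)$, while on $\nu'\big({\bf P}^2\big)$ one has $M_G=2\,{}^tw'\,w'$ with $w'=(s,u,t)$; matching gives $(s:t:u)=(a:-c:-b)$, so the condition $b^2=ac$ becomes $u^2=-st$, not $u^2=st$. This is not a defect of your approach but an internal inconsistency of the paper: with $\nu'$ as defined in Theorem~\ref{thmz}, every point of $\{u^2=st\}$, say $(s:t:u)=\big(x^2:y^2:xy\big)$, is sent to a point with $x_5=\tfrac12\big({-}st+u^2\big)=0$, whereas points of $\nu_2(C_2)$ have $x_5=b^2$, generically nonzero; so the paper's own claim that $\nu'$ maps $\{u^2=st\}$ onto $\nu_2(C_2)$ fails, and the correct conic for this $\nu'$ is $u^2+st=0$ (equivalently, the Lemma's conic is correct only after precomposing $\nu'$ with $(s:t:u)\mapsto(s:-t:-u)$). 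The corrected sign is also the one consistent with the line $L_K\colon s-\alpha_+\alpha_-t+(\alpha_++\alpha_-)u=0$ used later in the paper, since $\omega_\pm^2$ corresponds to $(s:t:u)=\big(\alpha_\pm^2:-1:-\alpha_\pm\big)$, which lies on $u^2+st=0$ and on $L_K$. So where you wrote that the factorization ``identifies the preimage under $\nu'$ with the conic in the statement,'' you should instead conclude that it identifies it with $\{u^2+st=0\}$ and record the sign discrepancy; everything else in your outline stands.
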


\begin{proof}The invariant conic $C_2$ can be parametrised by
$\omega=(xe_++ye_-)^2=x^2\omega_1+xy\omega_\sigma+y^2\omega_2\in {\bf P} B^1$
with $(x,y)\in{\bf C}^2$. Then $\nu_2(C_2)$ is a rational normal quartic curve
parametrised by the $\omega^2\in{\bf P} B^2$.
Similarly, the conic defined by
$u^2=st$ in ${\bf P}^2$ can be parametrised by $(s:t:u)=\big(x^2:y^2:xy\big)$.
One verifies that its image under~$\nu'$ is $\nu_2(C_2)$,
hence we found a curve $C_4\subset \nu_2(B^1)\cap \nu'\big({\bf P}^2\big)$.
A computer verified that there are no other points in the intersection.

From Theorem~\ref{thmz},
we know that the intersection of ${\bf P} B^2_K$ with the singular locus $S=\nu'\big({\bf P}^2\big)$ of $Z$
is the conic $C_K\subset {\bf P} B^2_K$ defined in Proposition~\ref{conicCK}.
The group $K^\times$ fixes $B^2_K$ and
has three orbits on $C_4=\nu_2(C_2)=\nu_4({\bf P} V_1)$, the fourth Veronese map,
they are the eigenvectors $\omega_\pm^2=\nu_4(v_\pm)$ and their complement.
The eigenvectors are in $C_4\cap {\bf P} B^2_K$,
but since $C_4$ spans a~${\bf P}^4$, the third orbit cannot intersect~${\bf P} B^2_K$.
\end{proof}

\begin{Remark}\label{remveroneses}
We discuss another aspect of the relation between the Veronese surfaces $\nu_2({\bf P} B^1)$,
with $\nu_2(\theta):=\theta^2\in {\bf P} B^2$, and $S=\nu'\big({\bf P}^2\big)=\operatorname{Sing}(Z)$.
For a Cartan subalgebra $K$, the plane of Hodge classes ${\bf P} B^2_K$ is the span of the
three points $\omega_+^2$, $\omega_-^2$ and $\omega_K^2$ in $\nu_2\big({\bf P} B^1\big)$. The closure of
the union of these planes is the cubic fourfold $Z$ which turned out to have another Veronese surface
$S$ as singular locus.
We showed that~$Z$ is the secant variety of~$S$
and that the intersection of $\nu_2\big({\bf P} B^1\big)$ and~$S$ is the quartic normal curve
$C_4=\nu_4({\bf P} V_1)$.

One can recover the Veronese surface $\nu_2\big({\bf P} B^1\big)$ from the secant variety $Z$ of $S$ and the
curve~$C_4$ as follows: the varieties ${\bf P} B^2\supset Z\supset S$
parametrize the conics, those of rank at most two and those of rank one in a ${\bf P}^2$.
The duality map $g\colon {\bf P} B^2\rightarrow {\bf P}^5$ is the birational map defined by
duality of conics. It sends the symmetric $3\times 3$ matrix defining the conic to its adjoint and
the base locus of $g$ is $S$.
The image of $Z$ is a Veronese surface $S^\vee\subset{\bf P}^5$. The image of a rank~2 conic,
so the union of two distinct lines in ${\bf P}^2$, is the intersection point of these two lines in ${\bf P}^2$.
The fiber of $g_{|Z-S}$ over $p$ consists of all (unordered) pairs of distinct lines
which intersect in $p$, so it is the complement of the diagonal in $\operatorname{Sym}^2{\bf P}^1\cong {\bf P}^2$.
The closure of this fiber is a plane ${\bf P}^2_p$ in ${\bf P} B^2$ which intersects~$S$ along a conic~$C_p$
that parametrizes the double lines that contain~$p$.

Recall that also the plane ${\bf P} B^2_K$ intersects $S$ along a conic $C_K$. As a plane intersecting
$S$ along a conic is the span of $\nu'(L)$ for a line $L$ in ${\bf P}^2$, there is only
a two-dimensional family of such planes and thus the general ${\bf P}^2_p$ is a ${\bf P} B^2_K$.

To recover $\nu_2\big({\bf P} B^1\big)$ we recall that ${\bf P} B^2_K$ intersects $C_4$ in the two points $\omega_\pm^2$.
By the proof of Proposition~\ref{conicCK},
$C_K$ is the conic in ${\bf P} B^2_K={\bf P}\big\{a\omega_+^2+b\omega_K^2+c\omega_-^2\big\}$ defined by $4b^2=ac$.
Thus the tangent lines to $C_K$ in the two points $\omega_\pm^2$ intersect in~$\omega_K^2$.
As any point in $\nu_2\big({\bf P} B^1\big)$, not on $C_4$, is a $\omega_K^2$ for a suitable~$K$,
we recover this Veronese surface from $S$ and $C_4$ as follows: in ${\bf P}^2_p$ we consider
the conic $C_p=S\cap {\bf P}^2_p$ and the two points $p_+,p_-\in C_p\cap C_4$. Let
$q_p\in {\bf P}^2_p$ be the point of intersection of the tangent lines to $C_p$ in $p_\pm$.
The image of the map $S^\vee\rightarrow Z\subset {\bf P}^5$, $p\mapsto q_p$
is the Veronese surface $\nu_2\big({\bf P} B^1\big)$. This map is a section of the duality map.
\end{Remark}

\subsection[Finding K]{Finding $\boldsymbol{K}$} \label{findB2p}

Given a codimension two Hodge class $c\in \cup {\bf P} B^2_K\subset Z\subset {\bf P} B^2$,
we want to determine a Cartan subalgebra $K$ such that $c\in {\bf P} B^2_K$. This $K$ will be
unique if $c$ is general. The case where $K$ is not unique is discussed in the next section
and occurs only if $c\in S$, the singular locus of $Z$ (see Proposition \ref{conicCK} and
Theorem \ref{thmz}).

As in Remark \ref{remveroneses},
a point $c\in Z-S$ corresponds to a reducible conic $L\cup M$ in a ${\bf P}^2$, which is the dual of the
domain of $\nu'$,
whereas a point $q\in S=\nu'({\bf P}^2)$ corresponds to a double line in this ${\bf P}^2$.
The pencil $sc+tq$ lies in $Z$ iff all conics in it are reducible,
so iff the double line which corresponds to $q$ passes through the point of intersection of $L$ and $M$.
There are two lines passing through this point that are tangent to the conic whose (double) tangent lines
parametrize~$C_4$. Thus for a given $c\in Z-S$ we find two points $q$ on $C_4$ such that
the lines $\langle c,q\rangle $ are contained in~$Z$.\looseness=-1

From Lemma~\ref{quarticc4}, we know that there are exactly two points
$\omega^2_\pm\in C_4$ in any ${\bf P} B^2_K$ and these points in fact determine~$K$.
In particular the two lines spanned by~$c$ and $\omega^2_\pm$
lie in ${\bf P} B^2_K$ and thus lie in $Z$.
We therefore consider the following procedure.

Let $q(x,y)$, for $(x,y)\in{\bf C}^2$, be the parametrization of $C_4$ given by the composition
${\bf P} V_1\rightarrow {\bf P} \operatorname{Sym}^2(V_1)\cong {\bf P} B^1\stackrel{\nu_2}{\rightarrow} {\bf P} B^2$:
\[
q\colon \ {\bf P} (V_1\otimes_{\bf Q}{\bf C}) \stackrel{\cong}{\longrightarrow} C_4,\qquad q(xe_++ye_-)=
\big(x^2\omega_1+xy\omega_\sigma+y^2\omega_2\big)^2.
\]
The line spanned by $c$ and $q(x,y)$ is parametrized by $sc+tq(x,y)$.
As $C_4\subset S$, the singular locus of $Z$, and $Z$ is defined by the cubic polynomial $G=0$,
we must have
\[
G(sc+tq(x,y))=f(x,y)^2s^2t,
\]
for some polynomial $f\in{\bf C}[x,y]$, homogeneous of degree $2$.
In case $f(x,y)=0$, the line $sc+tq(x,y)$ lies in $Z$.
Thus we conclude that for these $(x,y)$ the points
$q(x,y)=(xe_++ye_-)^4$ are the points~$\omega_\pm^2$. Hence $K$ is determined by the
two eigenvectors
$xe_++ye_-\in V_1\otimes_{\bf Q}{\bf C}$ where the points $(x:y)$ are the zeroes of~$f$.

The next theorem summarizes the main results we obtained.

\begin{Theorem} \label{thmsum}
A Hodge class in $Z-\operatorname{Sing}(Z)$ deforms in at most one four-dimensional family
of abelian fourfolds of Weil type, whereas a Hodge class in~$\operatorname{Sing}(Z)$ may deform in infinitely many
four-dimensional families of Weil type with distinct imaginary quadratic fields.
\end{Theorem}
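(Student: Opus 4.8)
The plan is to translate ``deforms in a family of Weil type'' into membership in one of the planes ${\bf P} B^2_K$ and then read off the two assertions from Proposition~\ref{conicCK} and Theorem~\ref{thmz}. By Section~\ref{famwt} a rational Hodge class $c\in B^2(J^2)$ deforms in the four-dimensional family attached to an imaginary quadratic field $K\hookrightarrow\operatorname{End}_{\bf Q}(J^2)$ exactly when $c\in B^2_{K,J^2}$, and after complexifying this says $c\in B^2_K$ for the Cartan subalgebra $K\otimes{\bf C}$ (Definition~\ref{defBpK}). Distinct imaginary quadratic fields give distinct eigenvector pairs, hence distinct Cartan subalgebras and, by Proposition~\ref{conicCK}, distinct planes ${\bf P} B^2_K$ meeting pairwise in a single point. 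Thus a family corresponds to a plane through $c$, and the whole statement reduces to counting these planes.

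For the first assertion I would argue by contradiction: if $c\in Z-\operatorname{Sing}(Z)$ lay in two planes ${\bf P} B^2_{K_1}$, ${\bf P} B^2_{K_2}$ with distinct Cartan subalgebras $K_1\ne K_2$, then by Proposition~\ref{conicCK} the class $c$ is their unique point of intersection and lies on the conic $C_{K_1}\subset{\bf P} B^2_{K_1}$. But Theorem~\ref{thmz} identifies $C_{K_1}={\bf P} B^2_{K_1}\cap S$ with $S=\operatorname{Sing}(Z)$, so $c\in\operatorname{Sing}(Z)$, a contradiction. Hence at most one family, which is the first half of the statement.

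For the second assertion I would exhibit a single rational point of $\operatorname{Sing}(Z)$ lying in infinitely many ${\bf P} B^2_K$ with pairwise distinct fields. Write a point of $S$ as $\nu'(p)$, $p\in{\bf P}^2$. Combining Remark~\ref{linesLK} with Lemma~\ref{quarticc4}, the eigenvector points $\omega_\pm^2=\nu'(\phi(\alpha_\pm))$, where $\phi(\alpha)=(\alpha^2:1:\alpha)$ parametrizes the conic $u^2=st$ and $v_\pm=(\alpha_\pm,1)$ are the eigenvectors of $K$, lie on $C_K=\nu'(L_K)$; hence $L_K$ is the secant of this conic through $\phi(\alpha_+),\phi(\alpha_-)$ and $\nu'(p)\in{\bf P} B^2_K$ iff $p\in L_K$. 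An imaginary quadratic field corresponds to a rational secant meeting the conic in a complex-conjugate pair, i.e.\ to a binary quadratic $T^2-\sigma T+\pi$ with roots $\alpha_\pm$ and negative discriminant $\Delta=\sigma^2-4\pi$, where $\sigma=\alpha_++\alpha_-$ and $\pi=\alpha_+\alpha_-$. A short determinant computation turns $p=(s_0:t_0:u_0)\in L_K$ into the single linear relation $s_0+t_0\pi-u_0\sigma=0$, so the secants through $p$ form a rational pencil in which $\Delta$ is an explicit quadratic function of the parameter $\sigma$. Taking $p=(-1:1:0)$, for which $c:=\nu'(p)=2\omega_1^2+2\omega_2^2+\omega_\sigma^2$ and $M_G$ has rank one (so $c\in S$), the relation gives $\pi=1$ and $\Delta(\sigma)=\sigma^2-4$, negative for $\sigma\in(-2,2)$.

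The one genuine difficulty is the arithmetic step: to conclude I must show that ${\bf Q}(\sqrt{\Delta(\sigma)})$ runs through infinitely many distinct imaginary quadratic fields as $\sigma$ ranges over the rationals of this interval. Since $\Delta(\sigma)=\sigma^2-4$ has distinct roots it is not a constant times a square, and the values of such a quadratic are known to realize infinitely many square classes; concretely I would set $\sigma=1/n$, giving $\Delta=-(4n^2-1)/n^2$ and the field ${\bf Q}\big(\sqrt{-(4n^2-1)}\big)$. As $4n^2-1=(2n-1)(2n+1)$ is a product of coprime factors its squarefree part is unbounded (otherwise infinitely many $n$ would solve a fixed Pell equation $(2n)^2-d\,k^2=1$ for one of finitely many $d$), so infinitely many distinct fields occur. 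Each such $K$ is a distinct imaginary quadratic field with $c\in B^2_{K,J^2}$, so the single Hodge class $c$ deforms in infinitely many four-dimensional Weil-type families with distinct fields, completing the proof. I expect this squarefree-part estimate, rather than any of the preceding reductions, to be the only real obstacle.
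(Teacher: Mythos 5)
Your translation of the problem and your proof of the first assertion are sound: if $c\in Z-\operatorname{Sing}(Z)$ lay in two distinct planes ${\bf P} B^2_{K_1}$, ${\bf P} B^2_{K_2}$, then Proposition~\ref{conicCK} puts $c$ on the conic $C_{K_1}$, and Theorem~\ref{thmz} gives $C_{K_1}={\bf P} B^2_{K_1}\cap\operatorname{Sing}(Z)$, a contradiction. This is correct and in fact slightly more direct than the paper's own proof, which invokes the tangent-line construction of Section~\ref{findB2p}.

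The second assertion is where your argument breaks down, and the failure is concrete: the class you exhibit deforms in \emph{no} family of Weil type. The identification $\omega_\pm^2=\nu'\big(\big(\alpha_\pm^2:1:\alpha_\pm\big)\big)$ is incompatible with the map $\nu'$ of Theorem~\ref{thmz}: comparing $\nu'\big(\big(\alpha_+^2:1:\alpha_+\big)\big)$ with $\omega_+^2=\big(\alpha_+^4:2\alpha_+^2:1:2\alpha_+^3:2\alpha_+:\alpha_+^2\big)$ shows the $x_3$ and $x_5$ coordinates disagree; in fact $\omega_+^2=\nu'\big(\big({-}\alpha_+^2:1:\alpha_+\big)\big)$, the conic mapping onto $C_4$ is $u^2=-st$ (the equation $u^2=st$ in Lemma~\ref{quarticc4} is a sign typo in the paper, which invited your choice), and the secant line is $L_K\colon s-\pi t+\sigma u=0$, exactly as stated in Section~\ref{cayleyclasses}. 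For your point $p=(-1:1:0)$, i.e.\ $c=\nu'(p)=2\omega_1^2+2\omega_2^2+\omega_\sigma^2$, the correct condition is therefore $\pi=\alpha_+\alpha_-=-1$, which no imaginary quadratic field can satisfy, since there $\alpha_+\alpha_-=\operatorname{Nm}(\alpha_+)>0$. Indeed, up to sign your $c$ is Markman's class $C_n$ with $n=-1$, and the paper explicitly notes (Section~\ref{cayleyclasses}, discussion of $p=(n:1:0)$ with $n<0$, and Proposition~\ref{propmarkman}) that such classes do not deform to any Weil-type family; a direct check with the standard embedding of ${\bf Q}(\sqrt{-1})$ (for which $\pi=1$, $\sigma=0$) confirms $c\notin B^2_K$. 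The repair is easy: take $p=(1:1:0)$, i.e.\ the class $2\omega_1^2+2\omega_2^2-\omega_\sigma^2$ with condition $\pi=1$ (norm-one elements exist in every imaginary quadratic field), or, simplest, the paper's own example $p=(0:0:1)$, the class $4\omega_1\omega_2+\omega_\sigma^2$, whose condition $\sigma=0$ is met by $\alpha_\pm=\pm\sqrt{-d}$ for every $d>0$, so all fields occur at once. Note finally that your closing arithmetic step is also unsound as written: that infinitely many $n$ solve one fixed Pell equation $(2n)^2-dk^2=1$ is not a contradiction, because Pell equations have infinitely many integer solutions; one would need their exponential growth (only $O(\log X)$ solutions up to $X$), or to take $2n-1$ prime, to conclude. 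With the choice $\sigma=0$ no number theory is needed at all.
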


\begin{proof}
For $c\in \cup {\bf P} B^2_K\subset Z$, $c\not\in S=\operatorname{Sing}(Z)$, there is a unique Cartan subalgebra $K$
such that $c\in{\bf P} B^2_K$, see Section~\ref{findB2p}. If $K$ is the complexification of an imaginary quadratic
field embedded in $M_2({\bf Q})$, then $c$ deforms in a~family of Weil type with that field.
If $c\in S$ then $c$ lies in infinitely many ${\bf P} B^2_K$, see also Section~\ref{cayleyclasses}.
\end{proof}

\section{The Cayley classes}\label{cayleyclasses}

\subsection[The Hodge classes in Sing(Z)]{The Hodge classes in $\boldsymbol{\operatorname{Sing}(Z)}$}

Combining Theorem~\ref{thmz} and Remark~\ref{linesLK} we conclude that any point
$c\in S=\operatorname{Sing}(Z)$ lies in~${\bf P} B^2_K$ for a one-dimensional family of
Cartan subalgebras $K$. Such classes, which we will call Cayley classes following~\cite{Markman},
are thus of special interest.

For a fixed Cayley class $c\in S$ we want to find all Cartan subalgebras $K$ such that $c\in B^2_K$.
For this it is convenient to use the parametrization $\nu'\colon {\bf P}^2\rightarrow S$ given in Theorem~\ref{thmz}. The plane of Hodge classes ${\bf P} B^2_K$ intersects $S$ along a conic~$C_K$
(see Proposition~\ref{conicCK})
and the inverse image $L_K:=\nu^{-1}(C_K)$ is a line in ${\bf P}^2$ which is easily computed:
\[
C_K = \nu'(L_K),\qquad L_K\colon \ s - \alpha_+\alpha_- t + (\alpha_+ + \alpha_-)u = 0
\qquad\big(\!\!\subset {\bf P}^2\big),
\]
where $v_\pm=(\alpha_\pm,1)$ are the eigenvectors of $K$. So if $p\in{\bf P}^2$ is such that
$c=\nu'(p)$ then $c\in B^2_K$ iff $p\in L_K$.

It is not always the case that a Cayley class in the ${\bf Q}$-vector space $B^2\big(J^2\big)$
underlying $B^2$ lies in a $B^2_K$, where $K$ is an imaginary quadratic field (rather than a
Cartan subalgebra of $M_2({\bf C})$). Taking for example the point $p=(n:1:0)\in {\bf P}^2$ we find that
that it lies in $L_K$ iff $\alpha_+\alpha_-=n$ (cf.\ Proposition~\ref{propmarkman}),
but for an imaginary quadratic field we must have
$\alpha_+=\overline{\alpha_-}\in K$ and then $\alpha_+\alpha_->0$. So for $n<0$ the Cayley class
$\nu'(p)$ does not deform to a Hodge class in any family of Weil type.

\begin{Example} We consider a Hodge class $C_\phi\in {\bf P} B^2$ which lies
in $S$:
\[
C_\phi = \omega_\sigma^2 + 4\omega_1\omega_2 = (0:1:0:0:0:4) = \nu'((0:0:1)).
\]
The corresponding Cartan subalgebras are thus the $K\subset M_2({\bf C})$,
determined by $v_\pm=(\alpha_\pm,1)$, such that the point
$p=(s:t:u)=(0:0:1)$ lies on the line $L_K$,
equivalently, $\alpha_++\alpha_-=0$.

In particular, for any $d>0$, the Cartan algebra defined by
the embedding $\phi\colon K={\bf Q}\big(\sqrt{-d}\big)\allowbreak \hookrightarrow M_2({\bf Q})$ given in Section~\ref{KEnd},
which is determined by the eigenvectors $v_\pm=(\alpha_\pm,1)$ with $\alpha_\pm=\pm\sqrt{-d}$,
has this Cayley class in $B^2_K$. Thus there are infinitely many, four-dimensional,
families of abelian varieties of Weil type with the property that $C_\phi$ lies
in the space of their Hodge classes.
\end{Example}

\subsection{Markman's Cayley class}
Markman in \cite{Markman} proves the Hodge conjecture
for all abelian fourfolds of Weil type with trivial discriminant.
For a fixed integer $n\geq 2$ he considers a
five-dimensional family of four-dimensional complex tori $T$ with
$H^1(T,{\bf Z})=V$, where $V:=H^1(X,{\bf Z})\oplus H^1\big(\hat{X},{\bf Z}\big)$ for an
abelian surface~$X$ and~$\hat{X}$ is the dual abelian surface.
The complex structure on~$T$ is determined by the subspace~$H^{1,0}(T)$
in the complexification of~$V$.

He shows that a certain class $C_n\in \wedge^4V= H^4(T,{\bf Z})$ remains of Hodge type $(2,2)$
for all members of the family. Moreover $C_n$ is shown to be
the second Chern class of a complex vector bundle ${\mathcal E}_T$ over $T$
using deformation theory in the hyperk\"ahler context.
The general torus in the family is not an abelian variety, but for each
imaginary quadratic field $K$ there is a four-dimensional subfamily of
abelian fourfolds of Weil type, with trivial discriminant, with this field.
Thus $C_n$ is algebraic, being a Chern class. Using the $K^\times$-action on $B^2_K$
one finds that all classes in $B^2_K$ are algebraic for any of these fourfolds of Weil type.

We will now assume that $X=J$, and that $\omega_J$ defines a principal polarization on~$J$.
Then we may identify $J^2$ with $X\times\widehat{X}$ and $H^1\big(J^2,{\bf Z}\big)$ with~$V$.
The four-dimensional families of Weil type will in general not
contain the three-dimensional subfamilies of
tori $T$ with $H^{1,0}(T)=H^{1,0}(J)\times H^{1,0}(J)$, where $J$ varies over the principally
polarized abelian surfaces.
They will often intersect this family in codimension one
and for the corresponding~$J$'s one has $\operatorname{End}_{\bf Q}(J)\neq {\bf Q}$, see Remark~\ref{cayT}.
The same is true for the similar families considered by O'Grady \cite[Theorem~5.1]{O'G}.

The following proposition identifies the cases in which the four-dimensional families
do contain all such decomposable abelian fourfolds of Weil type and their deformations.
It shows that Markman's Cayley classes~$C_n$ are indeed Cayley classes
as defined above. Moreover, it identifies
the corresponding imaginary quadratic fields and their embeddings into $\operatorname{End}\big(J^2\big)$.
The proposition shows that there must exist an
$\alpha_+\in K$ with conjugate $\alpha_-:=\overline{\alpha_+}$ and $\operatorname{Nm}(\alpha_+)=n$,
which is quite restrictive (for example if $p\equiv 3 \mod 4$ is a prime number then
there is no $\alpha\in {\bf Q}\big(\sqrt{-1}\big)$ with $\alpha\bar{\alpha}=p$).
This reflects the fact that the N\'eron Severi group of $J$ must have
rank two in the other cases.

\begin{Proposition} \label{propmarkman}
The Cayley class in {\rm \cite[Proposition~11.2]{Markman}} can also be written as
\[
C_n = n\big({-}n\omega_\sigma^2 + 2n^2\omega_1^2 + 2\omega_2^2\big)
\]
and $C_n\in \operatorname{Sing}(Z)$.
For any Cartan subalgebra $K\hookrightarrow M_2({\bf Q})$
with eigenvectors $v_\pm=(\alpha_\pm,1)$ such that
$\alpha_+\alpha_-=n$ we have $C_n\in B^2_K$. In fact
\[
(\alpha_+ - \alpha_-)^2C_{n} = n\big({-}\alpha_+\alpha_-\omega_K^2
 + 2\alpha_-^2\omega_+^2+2\alpha_+^2\omega_-^2\big)\qquad\big(\!\!\in B^2_K\big).
\]
\end{Proposition}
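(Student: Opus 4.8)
The plan is to verify the three assertions in turn, all by direct computation in the bases fixed earlier. First I would establish the alternative expression for $C_n$. Markman's class in \cite[Proposition~11.2]{Markman} is given there in terms of the intersection form on $V=H^1(X,{\bf Z})\oplus H^1(\hat X,{\bf Z})$; under the identification $J^2\cong X\times\hat X$ and $H^1(J^2,{\bf Z})\cong V$ set up just above, I would translate Markman's formula into the basis $\{{\rm d}x_j,{\rm d}y_j\}$ of $H^1(J^2,{\bf Q})$ and then re-express the resulting $4$-form in the invariant basis $\omega_1,\omega_2,\omega_\sigma$ of $B^1(J^2)$ (recall $\omega_j=\pi_j^*\omega_J$ and $\sigma^*\omega_J=\omega_1+\omega_2+\omega_\sigma$ from Proposition~\ref{B2}). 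This should be a mechanical matching of coefficients yielding
\[
C_n = n\big({-}n\omega_\sigma^2 + 2n^2\omega_1^2 + 2\omega_2^2\big).
\]
The main subtlety here is bookkeeping: Markman's conventions for the pairing and for the principal polarization must be reconciled with ours, and this is the step I expect to require the most care.

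Next I would show $C_n\in\operatorname{Sing}(Z)=S$. By Theorem~\ref{thmz}, $S$ is the image of $\nu'$, so it suffices to exhibit a point $(s:t:u)\in{\bf P}^2$ with $\nu'(s:t:u)$ proportional to $C_n$. Reading off the coordinates $(x_0:\dots:x_5)$ of $C_n$, namely $(2n^3:0:2n:0:0:-n^2)$ up to the overall scalar $n$, I would solve $\nu'(s:t:u)=\big(s^2:2u^2:t^2:-2us:2ut:\tfrac12(-st+u^2)\big)$ against these. The vanishing of the $x_1,x_3,x_4$ entries forces $u=s=0$ or $u=t=0$ or $u=0$; taking $u=0$ gives $\nu'(s:t:0)=(s^2:0:t^2:0:0:-\tfrac12 st)$, and choosing $(s:t)=(n:1)$ (so $s^2:t^2:-\tfrac12 st=n^2:1:-\tfrac{n}{2}$, i.e.\ proportional to $2n^3:2n:-n^2$) matches $C_n$. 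Thus $C_n=\nu'(n:1:0)$ up to scalar, proving $C_n\in S$.

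For the final identity I would work inside the plane $B^2_K$ of a Cartan subalgebra $K$ with eigenvectors $v_\pm=(\alpha_\pm,1)$. Using Lemma~\ref{lemKB}(1), $\omega_\pm=\alpha_\pm^2\omega_1+\alpha_\pm\omega_\sigma+\omega_2$ and $\omega_K=2\alpha_+\alpha_-\omega_1+(\alpha_++\alpha_-)\omega_\sigma+2\omega_2$, I would expand the squares $\omega_+^2,\omega_-^2,\omega_K^2$ in the standard basis of $B^2$ and form the combination
\[
n\big({-}\alpha_+\alpha_-\omega_K^2 + 2\alpha_-^2\omega_+^2 + 2\alpha_+^2\omega_-^2\big).
\]
Under the hypothesis $\alpha_+\alpha_-=n$, I expect the coefficients of $\omega_1\omega_2$, $\omega_1\omega_\sigma$, and $\omega_2\omega_\sigma$ to cancel and the surviving terms to collapse to $(\alpha_+-\alpha_-)^2$ times the expression $n(-n\omega_\sigma^2+2n^2\omega_1^2+2\omega_2^2)=C_n$; the factor $(\alpha_+-\alpha_-)^2$ arises naturally from the $\omega_\sigma^2$ coefficient once $\alpha_+\alpha_-$ is replaced by $n$. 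This both proves the displayed formula and, since the right-hand side is visibly a $K$-linear combination of $\omega_K^2,\omega_+^2,\omega_-^2$, shows $C_n\in B^2_K$ whenever $\alpha_+\alpha_-=n$. The only genuine check is the coefficient cancellation, which I would streamline by recording the three expansions as rows of a matrix in the six basis monomials and verifying the linear combination annihilates the three off-diagonal columns.
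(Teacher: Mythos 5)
Your proposal is correct and follows essentially the same route as the paper: translate Markman's class into the $\omega_1,\omega_2,\omega_\sigma$ basis, read off that $C_n=\nu'((n:1:0))\in\operatorname{Sing}(Z)$, and establish $C_n\in B^2_K$ exactly when $\alpha_+\alpha_-=n$. The only cosmetic difference is in the last step: the paper deduces membership from the line criterion $(n:1:0)\in L_K$ of Section~\ref{cayleyclasses} (which also yields the converse direction) and declares the displayed identity ``easy to verify,'' whereas you verify that identity directly by expanding $\omega_\pm^2$ and $\omega_K^2$ from Lemma~\ref{lemKB} --- a computation whose cancellations do close exactly as you predict, with the cross terms $\omega_1\omega_\sigma$, $\omega_1\omega_2$, $\omega_2\omega_\sigma$ vanishing identically and the hypothesis $\alpha_+\alpha_-=n$ needed only to match the diagonal coefficients to $C_n$.
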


\begin{proof}The Cayley class from \cite[Proposition~11.2]{Markman} is
\[
C_n := n\big({-}nc_1({\mathcal P})^2 + 4n^2\pi_1^*[pt_X] + 4\pi_2^*[pt_{\hat{X}}]\big),
\]
where ${\mathcal P}$ is the normalized Poincar\'e bundle over $X\times\widehat{X}$.
Under the isomorphism $J^2\rightarrow X\times \widehat{X}$ we have $\omega_1^2=2\pi_1^*[pt_X]$
and $\omega_2^2=2\pi_2^*[pt_{\hat{X}}]$.
The isomorphism $J=X\rightarrow \widehat{X}$ induces the map on lattices
$\Lambda=H_1(J,{\bf Z})\rightarrow \widehat{\Lambda}=H_1\big(\widehat{X},{\bf Z}\big)$,
$\lambda'\mapsto \omega_J(-,\lambda')$.
The Chern class $c_1({\mathcal P})$
is the 2-form on $\Lambda\times\widehat{\Lambda}$ defined by
$((\lambda,l),(\mu,m)):=m(\lambda)-l(\mu)$.
One finds that $c_1({\mathcal P})$ pulls back to $\omega_\sigma$ and that the first expression for
$C_n$ holds.

To find all Cartan subalgebras $K\subset M_2({\bf Q})$ for which $C_n$ is a Hodge class in $B^2_K$,
it is convenient to identify the point in ${\bf P}^2$ which maps to $C_n$ under the Veronese map $v'$:
\[
C_n = n\big({-}n\omega_\sigma^2 + 2n^2\omega_1^2 + 2\omega_2^2\big)
 = \big(2n^2:0:2:0:0:-n\big) = \nu'((n:1:0)) \in S = \operatorname{Sing}(Z).
\]
Thus $C_n\in B^2_K$ iff $(s:t:u)=(n:1:0)\in L_K$, which is equivalent to $\alpha_+\alpha_-=n$.
The last equality is easy to verify.
\end{proof}

\begin{Remark}\label{cayT} We discuss the fact that in Proposition~\ref{propmarkman} one doesn't find
all imaginary quadratic fields.
With the notation from \cite[Section~12]{Markman}, the five-dimensional family is pa\-ra\-met\-rized
by $\Omega_{w^\perp}$ and $w\in S^+$ determines the class $C_n$.
The four-dimensional subfamilies are parametrized by $\Omega_{\{w,h\}^\perp}$ and
consist of the four dimensional tori ${\mathcal T}_\ell$ with
$\ell\in \Omega_{\{w,h\}^\perp}$, in particular $(h,\ell)=0$ for a class $h\in w^\perp\cap S^+$ with
$(h,h)_{S^+}<0$ (cf.\ \cite[formula~(12.6), Corollary~12.9]{Markman}). The imaginary quadratic field $K\subset \operatorname{End}_{\bf Q}(T_\ell)$ is generated by an endomorphism $\Theta'_h\in \operatorname{End}({\mathcal T}_\ell)$ \cite[Lemma 12.5]{Markman}.
One can find such $h$ for which not all periods $\ell\in \Omega_{w^\perp}$ in the three-dimensional family parametrizing the abelian fourfolds ${\mathcal T}_\ell\cong J\times J$ will satisfy $(h,\ell)=0$.
For such an $h$ and any $\ell\in\Omega_{\{w,h\}^\perp}$ such that ${\mathcal T}_\ell\cong J\times J$,
the endomorphism $\Theta'_h$ of $J\times J$ cannot be in $M_2({\bf Q})$,
since it does not deform to all selfproducts.
Therefore $\operatorname{End}_{\bf Q}(J)\neq {\bf Q}$. From this one can deduce that the Picard number of $J$ cannot be one.
\end{Remark}

\section{Applications to algebraic cycles}\label{cycles}

\subsection{Jacobians of genus two curves}\label{jacs}
We consider two cases, \cite{schoen} and \cite{vG}, where explicit Weil classes
on decomposable fourfolds $A=J\times J$ are given in order to illustrate
our results.

In both cases, the abelian surface $J$ is the Jacobian of a general genus two curve $C$.
After fixing a base point on $C$,
the Abel--Jacobi image of $C\hookrightarrow J$,
which we denote by $C$ again,
is a divisor which defines a principal polarization.
Using the notation from Section~\ref{JJ},
$\omega_J\in H^2(J,{\bf Q})$ is the class of $C\subset J$.
We now consider some codimension two classes defined by surfaces in~$J^2$.
Recall that translations act trivially on the cohomology of abelian varieties.

The diagonal $S_1:=\Delta=\{(x,x)\colon x\in J\times J\}$ is a codimension two subvariety.
It is the inverse image of the point $0\in J$ along the difference map:
\[
S_1 := \Delta := \delta^{-1}0,\qquad
\delta\colon \ J\times J \longrightarrow J,\qquad(x,y)\longmapsto x-y.
\]
Its class $[S_1]$ is thus the pull-back along $\delta$ of the class of a point on $J$.
Since the intersection of $C$ with a general translate consists
of two points (in fact $\omega_J^2=2{\rm d} x_1\wedge\dots \wedge{\rm d} y_2$),
we find $2[S_1]=\delta^*\big(\omega_J^2\big)$ and hence
\[
[S_1] = [\Delta] = \tfrac{1}{2} \delta^*\big(\omega_J^2\big) =
\tfrac{1}{2} (\omega_1+\omega_2-\omega_\sigma)^2\qquad\big(\!\!\in B^2\big(J^2\big)\big).
\]

The surface $S_2:=C\times C\subset J\times J$ is the intersection of the divisors
$C\times J$ and $J\times C$, these divisors have classes $\omega_1$, $\omega_2$ respectively and thus
\[
[S_2] = [C\times C] = \omega_1\omega_2\qquad\big(\!\!\in B^2\big(J^2\big)\big).
\]

\subsection{The class of the Schoen surface}\label{cc}
As in \cite{schoen}, we consider the 2-cycle $S:=S_1+S_2$ in $J\times J$
with $S_1$, $S_2$ as in Section \ref{jacs}. We will use our results to show that its
class in $H^4\big(J^2,{\bf Q}\big)$ lies in a unique $B^2_K$.
We also determine $K$ as well as the type of the (essentially
unique) polarization $\omega_K$ of Weil type. The class of~$S$ is given by
\[
2[S] = 2([S_1]+[S_2]) = \omega_1^2 + 4\omega_1\omega_2 + \omega_2^2
 - 2\omega_1\omega_\sigma - 2\omega_2\omega_\sigma + \omega_\sigma^2.
\]

It is easy to check that $[S]$ lies on the cubic fourfold $Z=\overline{\cup B^2_K}$.
To find the subalgebra $K\subset M_2({\bf Q})$ such that $[S]\in B^2_K$
we consider the intersection of the line in ${\bf P}^5$
spanned by $c:=2[S]$ and a general point $q(x,y)=(xe_++ye_-)^4 \in C_4$ as in Section \ref{findB2p}.
A computation shows that
\[
G(sc+tq)=8\big(x^2+xy+y^2\big)^2s^2t,\qquad\text{hence}\quad
x^2+xy+y^2 = (x-\alpha_+y)(x-\alpha_-y),
\]
with $\alpha_\pm=\big({-}1\pm\sqrt{-3}\big)/2$. The Cartan subalgebra~$K$ is
determined by the eigenvectors $v_{\pm}=(\alpha_\pm,1)$ and thus the
field is $K={\bf Q}\big(\sqrt{-3}\big)$. The eigenvectors also determine the embedding
$K\hookrightarrow M_2({\bf Q})$:
\[
K = {\bf Q}\big(\sqrt{-3}\big) \hookrightarrow M_2({\bf Q}),\qquad
\frac{-1+\sqrt{-3}}{2} \longmapsto \left(\begin{matrix}-1&-1\\ \hphantom{-}1&\hphantom{-}0\end{matrix}\right).
\]

With this embedding of $K$, the Hodge class $[S]$ lies in $B^2_K$ and
will therefore deform to a~(unique) family of abelian fourfolds of
Weil type with this field~$K$.

The polarization $\omega_K$ is (up to scalar multiple)
equal to $\omega_K$ and we choose (cf.\ Lemma~\ref{lemKB})
\[
\omega_K = 2\omega_1 - \omega_\sigma + 2\omega_2,
\qquad\operatorname{Pfaff}(\omega_K) = (1-2\cdot 2)^2 = 9.
\]
Notice that in \cite[proof of Lemma~10.4]{schoen} one finds an ample line bundle
${\mathcal L}$ on a general deformation of Weil type which has
$h^0({\mathcal L})=\operatorname{Pfaff}(c_1({\mathcal L}))=9$,
our computation is consistent with this.
A further computation shows that the elementary divisors of the alternating form
defined by $\omega_K$ on ${\bf Z}^8$ are $(1,1,3,3)$.

In \cite{schoen}, Schoen proved that actually the surface $S_1\cup S_2$ in $J^2$ deforms
in a family of abelian varieties of Weil type with field ${\bf Q}\big(\sqrt{-3}\big)$. These deformations
were further studied in \cite{CMLR} and~\cite{RRS}.

\subsection[The field Q(i)]{The field $\boldsymbol{{\bf Q}(i)}$}\label{qi}
In \cite{vG} one finds a four-dimensional family of principally polarized
abelian fourfolds of Weil type with field $K={\bf Q}\big(\sqrt{-1}\big)$.
There is a rational dominant map from such a fourfold $A$
(in general with $16$ base points) $A\rightarrow Q$,
where $Q\subset {\bf P}^5$ is a smooth four-dimensional quadric.
The pull-back to~$A$ of a plane~${\bf P}^2\subset Q$ is shown to be an exceptional cycle $c$
(i.e., its class lies in $B^2(A)$ but is not a scalar multiple of $\omega_K^2$). The class of
this cycle was not completely determined in~\cite{vG}, but it can be easily done now.

In the proof of Theorem~3.7 of~\cite{vG} it is shown that, for some $a,b\in{\bf Z}$,
the class $c$ of the pull-back of a certain plane to $J\times J$ is
\begin{gather*}
c = [T]+a[S_1]+b[S_{-1}],\qquad
[T] = [(2C)\times(2C)]=4[S_2],\\
S_{-1} := \{(x,y)\in J\times J\colon x+y=0\}.
\end{gather*}
The class of $S_{-1}$ is obviously $(1,-1)^*[S_1]$, hence
\[
[S_{-1}]=\tfrac{1}{2}(\omega_1+\omega_2+\omega_\sigma)^2.
\]

To determine $a$, $b$ we recall from~\cite{vG} that the family contains fourfolds
$A=J\times J$, with~$J$ a~general abelian surface as before, the polarization
$\omega_K$ is the product
polarization $\omega_1+\omega_2$ and
the embedding of the field is
\[
K = {\bf Q}(\sqrt{-1}) \hookrightarrow M_2({\bf Q}),\qquad
\sqrt{-1} \longmapsto \left(\begin{matrix}0&-1\\1&\hphantom{-}0\end{matrix}\right).
\]
This embedding has the eigenvectors $v_\pm=(\mp {\rm i},1)$.
From Lemma~\ref{lemKB} one finds that $B^2_K$ is spanned by
\[
\omega_K^2 = 4(\omega_1+\omega_2)^2,\qquad \omega_+^2 = (\omega_1+{\rm i}\omega_\sigma-\omega_2)^2,\qquad
\omega_-^2 = (\omega_1-{\rm i}\omega_\sigma-\omega_2)^2.
\]

It is now easy to check that
\[
[T]+a[S_1]+b[S_{-1}] \in B^2_K\ \Longleftrightarrow \ a = b =1,
\]
thus we determined the class of the exceptional cycle: $c=[T]+[S_1]+[S_{-1}]$.

\section{The discriminant of the polarization}\label{secdis}

\subsection{The discriminant}
Let $(A,K,E)$ be a polarized abelian variety of Weil type with imaginary quadratic field
$K={\bf Q}\big(\sqrt{-d}\big)$ and $\dim A=n$.
The ${\bf Q}$-bilinear alternating form $E$ on $H_1(A,{\bf Q})$ satisfies
$E(kx,ky)=\operatorname{Nm}(k)E(x,y)$ for $x,y\in H_1(A,{\bf Q})$ and $k\in K$,
where $\operatorname{Nm}(k)=k\bar{k}\in {\bf Q}$ is the norm of $k$.
Then $H_1(A,{\bf Q})$ is a $K$-vector space and
$E$ defines a $K$-valued Hermitian form $H$ on this $K$-vector space by:
\[
H \colon \ H_1(A,{\bf Q})\times H_1(A,{\bf Q}) \longrightarrow K,\qquad
H(x,y) := E\big(x,\big(\sqrt{-d}\big)_*y\big) + \sqrt{-d}E(x,y).
\]
If $\Psi\in M_n(K)$ is the Hermitian matrix defining $H$ w.r.t.\ some $K$-basis of $H_1(A,{\bf Q})$
then $\det(\Psi)\in {\bf Q}^\times={\bf Q}-\{0\}$ and the class
of $\det(\Psi)\in {\bf Q}^\times/\operatorname{Nm}(K^\times)$ is independent of the choice of the basis,
it is called the discriminant of $(A,K,E)$.

\subsection{Trivial discriminant for selfproducts}\label{triv}
Let $A=J\times J$, for an abelian surface $J$ and let $K\subset \operatorname{End}_{\bf Q}(A)=M_2({\bf Q})$ be an
imaginary quadratic field.
Let $\omega_K\in H^2(A,{\bf Q})$ be the unique
(up to scalar multiple) $K$-invariant
polarization as in Lemma~\ref{lemKB} on the abelian fourfold of Weil type~$(A,K)$.
We show in Proposition \ref{triv dis} that the discriminant of
$(A,K,\omega_K)$ is trivial, and we briefly discuss more general decomposable
abelian fourfolds of Weil type in Remark~\ref{gendis}.

\begin{Proposition}\label{triv dis}
Let $J$ be a general abelian surface, let $A=J\times J$,
and let $(A,K,\omega_K)$ be an abelian fourfold of Weil type.
Then the discriminant of $(A,K,\omega_K)$ in ${\bf Q}^\times/\operatorname{Nm}(K^\times)$ is trivial.
\end{Proposition}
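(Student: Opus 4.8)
The plan is to exploit the product structure $A=J\times J$, which makes $H_1(A,\mathbf{Q})$ a tensor product on which the field $K$ acts through only one factor, so that the Hermitian form and its determinant become transparent. Write $U:=H_1(J,\mathbf{Q})$, a four-dimensional $\mathbf{Q}$-space carrying the symplectic form $E_J$ dual to $\omega_J$, and write $H_1(A,\mathbf{Q})=U\otimes_{\mathbf{Q}}L$, where $L=\mathbf{Q}^2$ is the two-dimensional space indexing the two copies of $J$ and on which $\operatorname{End}_{\mathbf{Q}}(A)=M_2(\mathbf{Q})$, hence $K$, acts. Thus $L$ is a one-dimensional $K$-vector space and $H_1(A,\mathbf{Q})$ is free of rank four over $K$. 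The key structural point I would establish first is that the alternating form $E$ on $H_1(A,\mathbf{Q})$ corresponding to $\omega_K$ factors as $E=E_J\otimes Q$ with $Q$ a symmetric $\mathbf{Q}$-bilinear form on $L$: by Lemma~\ref{lemKB} one has $\omega_K=2\alpha_+\alpha_-\,\omega_1+(\alpha_++\alpha_-)\omega_\sigma+2\omega_2$, and since $\omega_1,\omega_2,\omega_\sigma$ are respectively the intra-first-factor, intra-second-factor and cross pairings, each is $E_J$ tensored with a symmetric form on $L$; assembling them gives $E=E_J\otimes Q$ with $Q$ symmetric (antisymmetric $\otimes$ symmetric being antisymmetric, as it must be).

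Next I would translate the Weil-type condition $k^*\omega_K=\operatorname{Nm}(k)\omega_K$ into a property of $Q$. Because $k$ acts only on $L$, this reads $Q(k\ell,k\ell')=\operatorname{Nm}(k)Q(\ell,\ell')$ for all $k\in K$. Expanding for $k=1+\varepsilon\sqrt{-d}$ and comparing the terms linear in $\varepsilon$ gives $Q\big(\sqrt{-d}\,\ell,\ell'\big)+Q\big(\ell,\sqrt{-d}\,\ell'\big)=0$; in particular $Q\big(\ell,\sqrt{-d}\,\ell\big)=0$ for every $\ell$. This is exactly the input that diagonalizes the Hermitian form. Fixing a nonzero $\ell\in L$, so that $L=K\ell$ and $\{u_i\otimes\ell\}_{i=1}^4$ is a $K$-basis of $H_1(A,\mathbf{Q})$ for a symplectic basis $\{u_i\}$ of $U$, I would compute from $H(x,y)=E\big(x,(\sqrt{-d})_*y\big)+\sqrt{-d}\,E(x,y)$ that
\[
H(u_i\otimes\ell,\,u_j\otimes\ell)=E_J(u_i,u_j)\Big(Q\big(\ell,\sqrt{-d}\,\ell\big)+\sqrt{-d}\,Q(\ell,\ell)\Big)=\sqrt{-d}\,Q(\ell,\ell)\,E_J(u_i,u_j).
\]
Hence the Hermitian matrix is $\Psi=\sqrt{-d}\,Q(\ell,\ell)\cdot M$, where $M=\big(E_J(u_i,u_j)\big)$ is a rational antisymmetric $4\times4$ matrix; the vanishing just proved is precisely what makes $\Psi$ Hermitian, its entries being purely imaginary multiples of rationals.

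Finally I would take determinants. Since $M$ is a $4\times4$ alternating rational matrix, $\det M=\operatorname{Pfaff}(M)^2$ is a rational square, and
\[
\det\Psi=\big(\sqrt{-d}\,Q(\ell,\ell)\big)^4\det M=d^2\,Q(\ell,\ell)^4\,\operatorname{Pfaff}(M)^2=\big(d\,Q(\ell,\ell)^2\operatorname{Pfaff}(M)\big)^2,
\]
a perfect square in $\mathbf{Q}^\times$ (nonzero because $\omega_K$ is a polarization). As any rational square $r^2$ equals $\operatorname{Nm}(r)$ for $r\in\mathbf{Q}\subset K$, we conclude $\det\Psi\in\operatorname{Nm}(K^\times)$, so the discriminant is trivial. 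The only genuinely delicate step is the first one, namely the clean tensor factorization $E=E_J\otimes Q$ together with the derivation $Q\big(\ell,\sqrt{-d}\,\ell\big)=0$ from the Weil condition; everything after it is the one-line determinant computation above. I would double-check the orientation and normalization conventions (the exact scalar attached to $\omega_\sigma$, and whether $M$ is the standard symplectic matrix), but these alter $\det M$ only by a rational square and hence do not affect the conclusion.
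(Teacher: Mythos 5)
Your proof is correct and follows essentially the same route as the paper: both choose the $K$-basis of $H_1(A,{\bf Q})$ coming from a symplectic basis of $H_1(J,{\bf Q})$ tensored with one vector of the index space, show that the Hermitian matrix $\Psi$ is a purely imaginary rational multiple of the standard symplectic matrix, and conclude that $\det\Psi$ is a nonzero rational square, hence a norm from $K$. The only difference is cosmetic: you obtain the key vanishing $Q\big(\ell,\sqrt{-d}\,\ell\big)=0$ by linearizing the Weil condition $E(kx,ky)=\operatorname{Nm}(k)E(x,y)$ at $k=1+\varepsilon\sqrt{-d}$, whereas the paper (writing $\sqrt{-d}_*(f_i,0)=(af_i,bf_i)$ and $\omega_K=a_1\omega_1+a_\sigma\omega_\sigma+a_2\omega_2$) extracts the equivalent relation $aa_1+ba_\sigma=0$ from the Hermitian symmetry $\Psi_{ij}=\overline{\Psi_{ji}}$, and your tensor factorization $E=E_J\otimes Q$ is just a structural repackaging of the paper's table of values of $\omega_1$, $\omega_2$, $\omega_\sigma$ on the basis vectors.
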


\begin{proof}
Recall the basis ${\rm d} t_i$, $i=1,\dots, 4$, of $H^1(J,{\bf Q})$ from Section~\ref{JJ}. Let
$f_1,\dots,f_4$ be the dual basis of $H_1(J,{\bf Q})$, so that
${\rm d} t_i(f_j)=\delta_{ij}$, where $\delta_{kl}$ is
Kronecker's delta.
Then $H_1(A,{\bf Q})=H_1(J,{\bf Q})^{\oplus 2}$ and, since $\operatorname{End}_{\bf Q}(J)={\bf Q}$, the vectors
$(f_1,0),\dots,(f_4,0)$ are a $K$-basis of $H_1(A,{\bf Q})$.

In view of the definitions given in Proposition~\ref{B2},
for $i,j=1,\dots,4$, we find the following table:
\begin{alignat*}{3}
& \omega_1((f_i,0),(f_j,0))=\omega_J(f_i,f_j),\qquad && \omega_1((f_i,0),(0,f_j))=0,&\\
& \omega_1((0,f_i),(0,f_j))=0,&&&\\
&\omega_2((f_i,0),(f_j,0))=0,\qquad &&\omega_2((f_i,0),(0,f_j))=0,&\\
& \omega_2((0,f_i),(0,f_j))=\omega_J(f_i,f_j),&&& \\
& \omega_\sigma((f_i,0),(f_j,0))=0,\qquad && \omega_\sigma((f_i,0),(0,f_j))=\omega_J(f_i,f_j),&\\
& \omega_\sigma((0,f_i),(0,f_j))=0,&&&
\end{alignat*}
here one computes (with $\sigma$ as in the proof of Proposition~\ref{B2}):
\[
\omega_\sigma((f_i,0),(0,f_j)) = \sigma^*\omega_J((f_i,0),(0,f_j))
\]
since $\sigma^*\omega_J=\omega_1+\omega_2+\omega_\sigma$ and
$\omega_1$, $\omega_2$ are zero on $((f_i,0),(0,f_j))$,
and finally $\sigma^*\omega_J((f_i,0),(0,f_j))\!=\!\omega_J(\sigma(f_i,0),\sigma(0,f_j))=\omega_J(f_i,f_j)$.

The action of $K\subset M_2({\bf Q})$ on $H_1(A,{\bf Q})=H_1(J,{\bf Q})\otimes V_1$ is determined by
$a,b\in{\bf Q}$ such that $\sqrt{-d}_*(f_i,0)=(af_i,bf_i)$,
for $i=1,\dots,4$.
The polarization is a linear combination
\[
\omega_K = a_1\omega_1+a_\sigma \omega_\sigma+a_2\omega_2,\qquad a_1,a_\sigma,a_2\in{\bf Q}.
\]
The Hermitian form $H$ is determined by its matrix $\Psi=(\Psi_{ij})$
on the $K$-basis of $H_1(A,{\bf Q})$ given above:
\[
\Psi_{ij} := \omega_K((f_i,0),(af_j,bf_j)) + \sqrt{-d}\omega_K((f_i,0),(f_j,0)).
\]
From the table we see that
\[\omega_K((f_i,0),(f_j,0))=a_1\omega_J(f_i,f_j),\qquad
\omega_K((f_i,0),(0,f_j))=a_\sigma \omega_J((f_i,0),(0,f_j))
\]
and thus
\[
\Psi_{ij} = \big(a_1\big(a+\sqrt{-d}\big)+ba_\sigma\big)\omega_J(f_i,f_j).
\]
Since $H$ is a Hermitian form, we have $\Psi_{ij}=\overline{\Psi_{ji}}$ and using
$\omega_J(f_i,f_j)=-\omega_J(f_j,f_i)$ we get $aa_1+ba_\sigma=0$.
Therefore $\Psi_{ij}=\big(a_1\sqrt{-d}\big)\omega_J(f_i,f_j)$. Since the $f_i$ are a symplectic basis
of $H_1(J,{\bf Q})$ we get $\det(\omega_J(f_i,f_j))=1$. Thus $\det(\Psi)=a_1^4d^2=
\operatorname{Nm}\big(a_1^2d\big)$ is the norm of an element in ${\bf Q}\subset K$, so the discriminant of
$(A,K,\omega_K)$ is trivial.
\end{proof}

\subsection{General discriminants} \label{gendis}
Whereas $J^2$, with $\operatorname{End}_{\bf Q}(J)={\bf Q}$, only has structures of Weil type with trivial
discriminant, for example the self product
$E^4$ of an elliptic curve $E$ with CM by $K$ admits polarizations with any discriminant.
In fact, if $\operatorname{End}_{\bf Q}(E)=K$, then the embedding
$K\hookrightarrow \operatorname{End}_{\bf Q}\big(E^4\big)=M_4(K)$ given by $k\mapsto \big(k,k,\bar{k},\bar{k}\big)$
defines an abelian fourfold of Weil type. Let $\omega_E$ be a polarization on~$E$
(it is unique up to scalar multiple). Let
$f$ be a $K$-basis of $H_1(E,{\bf Q})\cong K$ and let $b:=\omega_E\big(f,\sqrt{-d}f\big)\in{\bf Q}$.

For any $a\in{\bf Q}_{>0}$ the polarization $\omega_a\in\wedge^2H^1\big(E^4,{\bf Q}\big)$ on $E^4$ defined by
\[
\omega_a((x_1,\dots,x_4),(y_1,\dots,y_4)) = \omega_E(x_1,y_1) + \dots +
\omega_E(x_3,y_3) + a\omega_E(x_4,y_4),
\]
where $x_i,y_i\in H_1(E,{\bf Q})$, satisfies
$\omega_a(k\cdot x,k\cdot y)=k^2\bar{k}^2\omega_a(x,y)$.
The matrix of the associated Hermitian form on the $K$-basis $(f,0,0,0),\dots,(0,0,0,f)$
of $H_1\big(E^4,{\bf Q}\big)$ is given by $\operatorname{diag}(b,b,-b,-ab)$ and thus it has determinant $ab^4$.
Hence the discriminant of the polarized abelian variety of Weil type $\big(E^4,\omega_a,K\big)$ is~$a$.
The dimension of $B^2\big(E^4\big)$ is however much larger than $6$ and the study of the limits
of spaces of Weil classes is thus more complicated.

\begin{Remark} The Hodge conjecture for abelian fourfolds of Weil type with fields
${\bf Q}\big(\sqrt{-3}\big)$, ${\bf Q}\big(\sqrt{-1}\big)$ and any discriminant has been proven
in \cite{schoenhodge}, \cite{Sadd} and~\cite{Koike}, respectively.
The method for both results
is the same: one verifies the Hodge conjecture for a family of abelian six-folds of
Weil type with the same field and trivial discriminant. Then one specializes to a product
$J\times B$, where~$B$ is a general abelian fourfold of Weil type and the discriminant
of~$B$ can be arbitrary. A~variant of this method, using abelian varieties with
quaternionic multiplication, was proposed in~\cite{vGV} but thus far has not had any
applications.
\end{Remark}

\subsection*{Acknowledgements}
Discussions with E.~Markman, K.G.~O'Grady, F.~Russo and C.~Schoen were very helpful.
I~thank the referees for their comments and suggestions.

\pdfbookmark[1]{References}{ref}
\LastPageEnding

\end{document}